\author{Ofir Gorodetsky}
\newcommand{\Addresses}{{
		\bigskip
		\footnotesize
		
		\textsc{Mathematical Institute, University of Oxford, Oxford, OX2 6GG, UK}\par\nopagebreak
		\textit{E-mail address:} \texttt{ofir.goro@gmail.com}
}}
\title{New representations for all sporadic Ap{\'e}ry-like sequences, with applications to congruences}
\date{}
\newtheorem*{thm*}{Theorem}
\newtheorem{thm}{Theorem}[section]
\newtheorem{lem}[thm]{Lemma}  
\newtheorem{proposition}[thm]{Proposition}
\newtheorem{cor}[thm]{Corollary}
\theoremstyle{definition}
\theoremstyle{remark}
\theoremstyle{definition}
\newcommand{\QQ}{\mathbb{Q}}
\newcommand{\CC}{\mathbb{C}}
\newcommand{\ZZ}{\mathbb{Z}}
\newcommand{\AZ}{AZ}
\newcommand{\CT}{\mathrm{CT}}
\newcommand{\CTS}{\mathrm{CTS}}
\mathchardef\mhyphen="2D
\newtheorem{remark}[thm]{Remark}
\newtheorem{definition}[thm]{Definition}
\numberwithin{equation}{section}
\newtheorem*{question*}{Question}
\begin{document}
\maketitle

\begin{abstract}
We find new representations, in terms of constant terms of powers of Laurent polynomials, for all the 15 sporadic Ap{\'e}ry-like sequences discovered by Zagier, Almkvist-Zudilin and Cooper.

The new representations lead to binomial expressions for the sequences, which, as opposed to previous expressions, do not involve powers of 3 or 8. We use these to establish the supercongruence
$B_{np^k} \equiv B_{np^{k-1}} \bmod p^{2k}$
for all primes $p \ge 3$ and integers $n,k \ge 1$, where $B_n$ is a sequence discovered by Zagier, known as Sequence $\mathbf{B}$.

Additionally, for 14 of the 15 sequences, the Newton polytopes of the Laurent polynomials contain the origin as their only interior integral point. This property allows us to prove that these sequences satisfy a strong form of the Lucas congruences, extending work of Malik and Straub. Moreover, we obtain lower bounds on the $p$-adic valuation of these sequences via recent work of Delaygue.
\end{abstract}

\section{Introduction}
Roger Ap{\'e}ry's ingenious proof of the irrationality of $\zeta(3)=\sum_{n \ge 1} 1/n^3$ and $\zeta(2)=\sum_{n \ge 1} 1/n^2$ \cite{apery1979, apery1981} utilized two particular sequences:
\begin{equation}\label{eq:binom sums}
a_n =\sum_{k=0}^{n} \binom{n}{k}^2 \binom{n+k}{k}^2,  \, b_n=\sum_{k=0}^{n} \binom{n}{k}^2 \binom{n+k}{k}
\end{equation}
for $n \ge 0$. These sequences are referred to as Ap{\'e}ry sequences or Ap{\'e}ry numbers. Ap{\'e}ry constructed explicit rational approximations to $\zeta(3)$ and $\zeta(2)$, with denominators being small multiples of $a_n$ and $b_n$, respectively. These rational approximations turn out to converge quickly enough (in a particular sense) to $\zeta(3)$ and $\zeta(2)$ as $n \to \infty$ in order to conclude that $\zeta(3)$, $\zeta(2) \notin \QQ$.

Originally, Ap{\'e}ry defined $a_n$ and $b_n$ through 3-term recurrences with given initial values:
\begin{equation}\label{eq:recs}
\begin{split}
&(n+1)^3 a_{n+1} - (2n+1)(17n^2+17n+5)a_n+n^3 a_{n-1}=0,\qquad (a_0=1,\, a_1=5)\\
&(n+1)^2 b_{n+1} -(11 n^2+11 n+3) b_n -n^2 b_{n-1}=0,\qquad (b_0=1,\, b_1=3)
\end{split}
\end{equation}
and showed that $a_n$ and $b_n$ can be expressed as the binomial sums in \eqref{eq:binom sums}. The recurrences for $a_n$ and $b_n$ can be restated in terms of differential equations for the g.f.s (generating functions)
\begin{equation}
 F_{\mathbf{a}}(z) = \sum_{n \ge 0} a_n z^n \mbox{ and } F_{\mathbf{b}}(z) = \sum_{n \ge 0} b_n z^n,
\end{equation}
namely, setting $\theta := z \frac{d}{dz}$, $F_{\mathbf{a}}$ is a solution to the third-order equation
\begin{equation}\label{eq:ode1}
(\theta^3-z(2\theta+1)(17\theta^2 +17\theta +5)+z^2(\theta+1)^3)y=0,
\end{equation}
while $F_{\mathbf{b}}$ solves the second-order equation
\begin{equation}\label{eq:ode2}
(\theta^2-z(11\theta^2 +11\theta +3)-z^2(\theta+1)^2)y=0.
\end{equation}
Beukers \cite{beukers1987,beukers19872} showed that $F_{\mathbf{a}}$, $F_{\mathbf{b}}$ enjoy modular parametrization. Namely, there is a modular function $t_1(z)$ ($z$ in the upper-half plane) with respect to the modular subgroup $\Gamma_1(6)$ such that $F_{\mathbf{a}} \circ t_1$ is a modular form of weight 2 with respect to $\Gamma_1(6)$. Similarly, there is a modular function $t_2(z)$ with respect to $\Gamma_1(5)$ such that $F_{\mathbf{b}} \circ t_2$ is a modular form of weight 1 with respect to $\Gamma_1(5)$.

This is not a coincidence, and in fact related to the shape of the specific differential equations. Equation \eqref{eq:ode2} is a Picard-Fuchs equation, while \eqref{eq:ode1} is a symmetric square of a Picard-Fuchs equation. We shall not discuss this geometric aspect here, and instead refer the reader to the papers \cite{stienstra1985,peters1989,verrill2001} and to Zagier's survey \cite{zagier2018}.
\subsection{Sporadic sequences}
There is no apparent reason for solutions of the recurrences \eqref{eq:recs} to be integers (or, equivalently, for solutions of \eqref{eq:ode1} or \eqref{eq:ode2}, which are regular at the origin, to have only integral coefficients in their Taylor expansion around $z=0$). It is a surprise that $a_n$ and $b_n$ are integers. For this reason, Zagier \cite{zagier2009} searched for triples $(A,B,\lambda) \in \ZZ^3$ such that the recurrence
\begin{equation}\label{eq:zag eq}
(n+1)^2 u_{n+1} -(An^2+An+\lambda)u_n+Bn^2u_{n-1}=0, \qquad n \ge -1,
\end{equation}
has a solution in \emph{integers} with $u_{-1}=0$, $u_0=1$. A solution $u_n$ gives rise to a power series $y=\sum u_n z^n$ satisfying the second-order differential equation
\begin{equation}\label{eq:diff2}
(\theta^2-z(A\theta^2 +A\theta +\lambda)+Bz^2(\theta+1)^2)y=0
\end{equation}
and vice-versa. For $(A,B,\lambda)=(11,-1,3)$ we already know that there is such a desired solution, namely $b_n$. In his computer search, two well-understood families were found: Legendrian and hypergeometric. In addition, six sporadic solutions are found. The two families are easily explained, in the sense that the g.f.s $\sum u_n z^n$ have particularly simple forms, which explain the integrality.  The six sporadic sequences, which include $u_n=b_n$, do not fall into either of these families and are not explained as easily. They are given in the following table.
\begin{center}
	\begin{tabular}{||c c c c ||}
		\hline
		$(A,B,\lambda)$ & Name & Formula & Other names \\ [0.5ex]
		\hline\hline
		$(7,-8,2)$ & $\mathbf{A}$ & $u_n = \sum_{k=0}^{n} \binom{n}{k}^3$ & Franel numbers \\
		$(9,27,3)$ & $\mathbf{B}$ & $u_n = \sum_{k=0}^{\lfloor n/3 \rfloor} (-1)^k 3^{n-3k}\binom{n}{3k} \binom{3k}{2k} \binom{2k}{k}$ &  \\
		$(10,9,3)$ & $\mathbf{C}$ & $u_n = \sum_{k=0}^{n} \binom{n}{k}^2 \binom{2k}{k}$ & \\
		$(11,-1,3)$ & $\mathbf{D}$ & $u_n = \sum_{k=0}^{n} \binom{n}{k}^2 \binom{n+k}{n}$ & Ap{\'e}ry numbers \\
		$(12,32,4)$ & $\mathbf{E}$ & $u_n = \sum_{k=0}^{\lfloor n/2 \rfloor} 4^{n-2k} \binom{n}{2k} \binom{2k}{k}^2$ & \\
		$(17,72,6)$ & $\mathbf{F}$ & $u_n = \sum_{k=0}^{n} (-1)^k 8^{n-k} \binom{n}{k} \sum_{j=0}^{k} \binom{k}{j}^3$ & \\
		[1ex]
		\hline
\end{tabular}
\end{center}
Zagier conjectured that his (finite) search yielded all the integral solutions to \eqref{eq:zag eq}. This is suggested by the fact that so few sporadic solutions were found in a very large parameter domain. As in the case of $b_n$, the differential equations associated with the sequences found by Zagier are Picard–Fuchs equations, and the solutions to the differential equations have modular parametrizations \cite[\S5,~\S7]{zagier2009}.

Similar searches were conducted in relation to $a_n$. Almkvist and Zudilin \cite{almkvist2006} searched for $(a,b,c) \in \ZZ^3$ such that the recurrence
\begin{equation}\label{eq:almkvist}
(n + 1)^3 u_{n+1} - (2n+1)(an^2+an+b)u_n +cn^3 u_{n-1} =0
\end{equation}
has integer solutions with $u_{-1}=0$, $u_0=1$. They found 6 sporadic solutions, including $a_n$, which corresponds to $(a,b,c) = (17,5,1)$:
\begin{center}
	\begin{tabular}{||c c c c ||}
		\hline
		$(a,b,c)$ & Name & Formula & Other names \\ [0.5ex]
		\hline\hline	
		$(7,3,81)$ & $(\delta)$ & $u_n = \sum_{k=0}^{\lfloor n/3 \rfloor} (-1)^k 3^{n-3k} \binom{n}{3k} \binom{n+k}{n} \binom{3k}{2k} \binom{2k}{k}$ & Almkvist-Zudilin numbers\\
		$(11,5,125)$ & $(\eta)$ & $u_n = \sum_{k=0}^{\lfloor n/5 \rfloor} (-1)^k \binom{n}{k}^3 \left( \binom{4n-5k-1}{3n} + \binom{4n-5k}{3n} \right)$ & \\
		$(10,4,64)$ & $(\alpha)$ & $u_n = \sum_{k=0}^{n} \binom{n}{k}^2\binom{2k}{k}\binom{2n-2k}{n-k}$ & Domb numbers \\
		$(12,4,16)$ & $(\epsilon)$ & $u_n = \sum_{k=\lceil n/2 \rceil}^{n} \binom{n}{k}^2 \binom{2k}{n}^2$ &  \\
		$(9,3,-27)$ & $(\zeta)$ & $u_n = \sum_{k,l} \binom{n}{k}^2 \binom{n}{l}\binom{k}{l}\binom{k+l}{n}$ & \\
		$(17,5,1)$ & $(\gamma)$ & $u_n = \sum_{k=0}^{n} \binom{n}{k}^2 \binom{n+k}{k}^2$ &  Ap{\'e}ry numbers\\
		[1ex]
		\hline
	\end{tabular}
\end{center}
In \cite[Thm.~1]{almkvist2011}, Almkvist, van Straten and Zudilin discovered a bijection between Zagier's sequences and Almkvist and Zudilin's (cf. Cooper \cite[p.~400]{cooper2017}). The sequences $\mathbf{A}$-$\mathbf{F}$ correspond to $(\delta)$, $(\zeta)$, $(\alpha)$, $(\eta)$, $(\epsilon)$, $(\gamma)$, in this order, via sending $(A,B,\lambda)$ to $(a,b,c)=(A,A-2\lambda,A^2-4B)$, and the g.f.s for the Almkvist and Zudilin sequences are essentially the squares of the corresponding sequences of Zagier:
\begin{equation*}
\sum_{n\ge 0} u'_n \left(\frac{-x}{1-A x+B x^2}\right)^{n+1} = (-x) \bigg(\sum_{n\ge 0} u_n x^n\bigg)^2 
\end{equation*}
where $u'_n$ is the third-order sequence corresponding to the solution $u_n$ of \eqref{eq:zag eq}.

Later, Cooper \cite{cooper2012} introduced an additional parameter $d \in \ZZ$:
\begin{equation}\label{eq:cooper}
(n + 1)^3 u_{n+1} - (2n+1)(an^2+an+b)u_n +n(cn^2+d)u_{n-1} =0.
\end{equation}
A solution $u_n$ gives rise to a power series $y=\sum u_n z^n$ satisfying the third-order differential equation
\begin{equation}\label{eq:diff3}
(\theta^3-z(2\theta+1)(a\theta^2 +a\theta +b)+z^2(c(\theta+1)^3+d(\theta+1)))y=0
\end{equation}
and vice-versa. Cooper found $3$ additional sporadic solutions, named $s_{7}$, $s_{10}$ and $s_{18}$. He established their modular parametrization (the subscript is related to the level of the corresponding modular form). Formulas for the terms of $s_7$, $s_{10}$ and $s_{18}$ were obtained by Zudilin:
\begin{center}
	\begin{tabular}{||c c c c ||}
		\hline
		$(a,b,c,d)$ & Name & Formula & Other names \\ [0.5ex]
		\hline\hline	
		$(13,4,-27,3)$ & $s_{7}$ & $u_n = \sum_{k=\lceil n/2 \rceil}^{n} \binom{n}{k}^2 \binom{n+k}{k} \binom{2k}{n}$ &  \\
		$(6,2,-64,4)$ & $s_{10}$ & $u_n = \sum_{k=0}^{n} \binom{n}{k}^4$ & Yang-Zudilin numbers \\
		$(14,6,192,-12)$ & $s_{18}$ & $	u_n = \sum_{k=0}^{\lfloor n/3 \rfloor} (-1)^k \binom{n}{k} \binom{2k}{k} \binom{2n-2k}{n-k} \left( \binom{2n-3k-1}{n} + \binom{2n-3k}{n} \right)$ & \\
		[1ex]
		\hline
	\end{tabular}
\end{center}
In total, we have $15$ sporadic solutions, which are often referred to as Ap{\'e}ry-like sequences.
\subsection{Main results}
We say that a sequence $\{u_n \}_{n \ge 1}$ is the constant term sequence of a Laurent polynomial $\Lambda \in \CC[x_1^{\pm 1},x_2^{\pm 1},\ldots,x_d^{\pm 1}]$ if $u_n$ is the constant term of $\Lambda^n$ for every $n \ge 1$. Our main result is
\begin{thm}\label{thm:newton}
Let $\{u_n \}_{n \ge 1}$ be one of the 15 sporadic sequences, and denote by $k \in \{2,3\}$ the order of the differential equation it satisfies. Then $u_n$ is a constant term sequence of a Laurent polynomial $\Lambda$  with integer coefficients in $k$ variables $x_1,\ldots,x_k$. In addition,
\begin{enumerate}
	\item $\Lambda$ may be written as a product of Laurent polynomials, each with coefficients in the set $\{-1,0,1\}$.
	\item $\Lambda$ may be chosen in such a way that it is a (usual) polynomial divided by $\prod_{i=1}^{k} x_i$.
	\item For all but possibly $(\eta)$, $\Lambda$ may be chosen in such a way that its Newton polytope contains the origin as its only interior integral point.
	\item For all but possibly $(\eta)$ and $\mathbf{F}$, $\Lambda$ may be chosen to satisfy the last 3 conditions simultaneously.	In the case of $\mathbf{F}$, one may choose a Laurent polynomial $\Lambda(x_1,x_2)$ satisfying the last two conditions, with $\Lambda(x_1^2,x_2^2)$ satisfying the first condition.
\end{enumerate}

\end{thm}
Specific Laurent polynomials are given in Proposition~\ref{prop:rep}, from which we derive new binomial sum representations for $\mathbf{B}$, $\mathbf{F}$ and $(\delta)$.
\begin{proposition}\label{prop:formulas}
	Consider the set
	\begin{align}
	S(n)  = \left\{(\mathbf{a},\mathbf{b},\mathbf{c}) \in \ZZ_{\ge 0}^9  : \substack{a_1+a_2+a_3 = n,\\b_1+b_2+b_3 = n, \\ c_1+c_2+c_3 = n,} \quad \substack{\forall 1 \le i \le 3:\, a_i + b_i+c_i  =n, \\ 3 \mid b_2+2b_3 +2c_2 + c_3 }\right\}.
	\end{align}
	The sequence $\mathbf{B}$ can be written as
	\begin{equation}
	2(-1)^n B_n = 3\sum_{(\mathbf{a},\mathbf{b},\mathbf{c}) \in S(n)} \binom{n}{\mathbf{a}}\binom{n}{\mathbf{b}} \binom{n}{\mathbf{c}} -\binom{3n}{n,n,n}.
	\end{equation}
	Consider the set
	\begin{align}
	T(n)  = \left\{ (\mathbf{a},\mathbf{b},\mathbf{c},\mathbf{d}, \mathbf{e}) \in \ZZ_{\ge 0}^{15}  : \substack{a_1+a_2+a_3 = n,\\b_1+b_2+b_3 = n, \\ c_1+c_2+c_3 = n,} \quad \substack{ d_1+d_2+d_3=n, \\ e_1+e_2+e_3=n, \\ \forall 1 \le i \le 3:\, a_i + b_i+c_i+d_i+2e_i  =2n} \right\}.
	\end{align}
	The sequence $\mathbf{F}$ can be written as
	\begin{equation}\label{eq:fn form}
	F_n = \sum_{(\mathbf{a},\mathbf{b},\mathbf{c},\mathbf{d}, \mathbf{e}) \in T(n)} \binom{n}{\mathbf{a}}\binom{n}{\mathbf{b}} \binom{n}{\mathbf{c}} \binom{n}{\mathbf{d}} \binom{n}{\mathbf{e}} (-1)^{a_1+b_2+c_3}.
	\end{equation}
	Consider the set
	\begin{align}
	U(n)  = \left\{ (\mathbf{a},\mathbf{b},\mathbf{c},\mathbf{d}) \in \ZZ_{\ge 0}^{12}  : \substack{a_1+a_2+a_3 = n,\\b_1+b_2+b_3 = n, \\ c_1+c_2+c_3 = n,\\d_1+d_2+d_3 = n,} \quad \substack{b_1+c_1+d_1 = n,\\a_1+b_2+d_2 = n,\\ a_2+b_3+c_2 = n}\right\}.
	\end{align}
	The sequence $(\delta)$ can be written as
	\begin{equation}\label{eq:azn form}
	\AZ_n = \sum_{(\mathbf{a},\mathbf{b},\mathbf{c},\mathbf{d}) \in U(n)} \binom{n}{\mathbf{a}}\binom{n}{\mathbf{b}} \binom{n}{\mathbf{c}} \binom{n}{\mathbf{d}}  (-1)^{a_2+b_1+d_3}.
	\end{equation}
\end{proposition}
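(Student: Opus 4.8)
The plan is to read each formula directly off the constant-term representations supplied by Proposition~\ref{prop:rep}, by expanding the relevant power of the Laurent polynomial with the multinomial theorem and collecting the constant term. Write $\Lambda$ for the Laurent polynomial attached to the sequence in question. By Proposition~\ref{prop:rep}, $\Lambda$ factors as a product $\Lambda = \prod_j \Lambda_j$ in which every factor is a sum of monomials with coefficients in $\{-1,0,1\}$, so that $\Lambda^n = \prod_j \Lambda_j^n$. Expanding each $\Lambda_j^n$ by the multinomial theorem produces a multinomial coefficient $\binom{n}{\cdot}$ together with a sign coming from the $-1$ coefficients, indexed by a composition of $n$; the product over $j$ yields the product of multinomial coefficients and the global sign. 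The constant term of $\Lambda^n$ is then the subsum over those index tuples for which the total exponent of each variable vanishes, and these vanishing conditions are exactly the linear equations appearing in the definitions of $S(n)$, $T(n)$ and $U(n)$.

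For $\mathbf{F}$ and $(\delta)$ this bookkeeping is all that is required. For $\mathbf{F}$, the polynomial is a product of five ternary factors, one for each of $\mathbf{a},\mathbf{b},\mathbf{c},\mathbf{d},\mathbf{e}$; the conditions $a_1+a_2+a_3=n,\ldots,e_1+e_2+e_3=n$ record that each factor is raised to the $n$-th power, while the three conditions $a_i+b_i+c_i+d_i+2e_i=2n$ ($1 \le i \le 3$) record the vanishing of the exponent of each of the three variables, the coefficient $2$ on $e_i$ reflecting the exponents in the $\mathbf{e}$-factor. The sign $(-1)^{a_1+b_2+c_3}$ records which monomial carries the coefficient $-1$ in the first three factors (the first in the $\mathbf{a}$-factor, the second in the $\mathbf{b}$-factor, the third in the $\mathbf{c}$-factor), the remaining two factors being nonnegative. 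The computation for $\AZ_n$ is identical with four factors and sign $(-1)^{a_2+b_1+d_3}$; here I would simply match the seven linear constraints of $U(n)$ against the sum-to-$n$ and vanishing-exponent conditions of the four-factor product.

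The sequence $\mathbf{B}$ is more delicate, because its representation in Proposition~\ref{prop:rep} involves a primitive cube root of unity $\omega$, so the monomial coefficients in the expansion are powers of $\omega$ rather than signs. Concretely, the coefficient attached to a constant-term index tuple is $\binom{n}{\mathbf{a}}\binom{n}{\mathbf{b}}\binom{n}{\mathbf{c}}\,\omega^{m}$ with $m = b_2+2b_3+2c_2+c_3$. Grouping the constant-term sum according to $m \bmod 3$ and writing $P_r$ for the partial sum over tuples with $m \equiv r \bmod 3$, the constant term equals $P_0 + \omega P_1 + \omega^2 P_2$. I would next establish $P_1 = P_2$, via the symmetry $\omega \leftrightarrow \omega^2$ (a permutation of the underlying variables that conjugates the representation), so that the constant term collapses to $P_0 - P_1$; combined with $P_0 + P_1 + P_2 = \binom{3n}{n,n,n}$, the total sum with the congruence dropped being the central multinomial coefficient, this yields $P_0 - P_1 = \tfrac12\bigl(3P_0 - \binom{3n}{n,n,n}\bigr)$. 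Since $P_0 = \sum_{(\mathbf{a},\mathbf{b},\mathbf{c}) \in S(n)} \binom{n}{\mathbf{a}}\binom{n}{\mathbf{b}}\binom{n}{\mathbf{c}}$ and the constant term equals $(-1)^n B_n$ by Proposition~\ref{prop:rep}, this is exactly the claimed identity $2(-1)^n B_n = 3\sum_{S(n)} \binom{n}{\mathbf{a}}\binom{n}{\mathbf{b}}\binom{n}{\mathbf{c}} - \binom{3n}{n,n,n}$, the congruence $3 \mid b_2+2b_3+2c_2+c_3$ being exactly the defining condition of $S(n)$.

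I expect the main obstacle to be the $\mathbf{B}$ case: one must pin down the precise monomials of the factorization and the exponent of $\omega$ carried by each, then verify both that the involution giving $P_1 = P_2$ preserves the constraints of $S(n)$ and that the untwisted total indeed evaluates to $\binom{3n}{n,n,n}$, so that the constants $3$ and $2(-1)^n$ emerge correctly from the roots-of-unity filter. For $\mathbf{F}$ and $\AZ_n$ the only care needed is to confirm that the constant-term monomials are in bijection with the nonnegative lattice points of $T(n)$ and $U(n)$, i.e.\ that restricting to $\ZZ_{\ge 0}^{15}$ and $\ZZ_{\ge 0}^{12}$ introduces no spurious or missing boundary terms; this is automatic since the multinomial coefficients vanish outside the stated ranges.
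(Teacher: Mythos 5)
Your proposal reproduces the paper's own proof in essentially every respect: expand the constant-term representations of Proposition~\ref{prop:rep} by the multinomial theorem, identify the vanishing-exponent conditions with the defining constraints of $S(n)$, $T(n)$ and $U(n)$ (with the signs $(-1)^{a_1+b_2+c_3}$ and $(-1)^{a_2+b_1+d_3}$ tracking the $-1$ coefficients), and for $\mathbf{B}$ factor the quadratic as $(x+y\omega+\omega^2)(x+y\omega^2+\omega)$, giving the coefficient $\omega^{b_2+2b_3+2c_2+c_3}$, with the untwisted total sum evaluating to $\binom{3n}{n,n,n}$. The only cosmetic difference is the finish for $\mathbf{B}$: the paper takes real parts using $2\Re(\omega^k)=3\cdot 1_{3\mid k}-1$, whereas you prove $P_1=P_2$ by the $\mathbf{b}\leftrightarrow\mathbf{c}$ symmetry and then solve for $P_0-P_1$; both are valid and amount to the same roots-of-unity filter.
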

\subsection{Gauss congruences}
Both $a_n$ and $b_n$ possess a wealth of arithmetic properties. We concentrate on Gauss congruences.
\begin{definition}\label{def:gauss}
	A sequence $\{ u_n\}_{n \ge 1}$ of integers is said to satisfy the Gauss congruences of order $r$ if, for all primes $p \ge r+1$ and all positive integers $k,n$ we have
	\begin{equation}\label{eq:gauss cong}
	u_{np^k} \equiv u_{np^{k-1}} \bmod {p^{rk}}.
	\end{equation}
\end{definition}
The case $r=1$ is simply called `Gauss congruences', while the case $r>1$ is an example of \emph{supercongruences}. 
A well-known example of a sequence satisfying the Gauss congruences is $u_n=a^n$ for any choice of $a \in \ZZ$. Another example of such a sequence is given by $u_n = \binom{2n}{n}$ \cite[Ch.~5.3.3]{robert2000}. In fact, $\binom{2n}{n}$ satisfies the Gauss congruences of order $3$, as shown by Jacobsthal \cite{brun1949} (cf. \cite[Ch.~7.1.6]{robert2000}).

Chowla, Cowles and Cowles \cite{chowla1980}, based on numerical observations, made several conjectures on the values of $a_n$ modulo primes and prime powers. Gessel \cite{gessel1982} proved generalizations of their conjectures. In particular, in \cite[Thm.~3]{gessel1982} he proved that $a_n$ satisfies the Gauss congruences of order $3$, but only in the special case where $k=1$ in \eqref{eq:gauss cong}. This was established independently by Mimura \cite[Prop.~2]{mimura1983}. Finally, in his PhD thesis, Coster \cite{coster1988} showed that the Gauss congruences of order $3$ are satisfied by $a_n$ (without any restriction on $k$), as well as by $b_n$.  

Following numerical observations, it was conjectured that all sporadic sequences satisfy the Gauss congruences of order $2$ or $3$ (depending on the sequence), see \cite[Table~2]{osburn2016}. The fact that they all satisfy the Gauss congruences of order $1$ is explained by modular parametrization, see e.g. \cite{verrill2010,osburn20112,moy2013}. Establishing Gauss congruences of higher order is a harder task. In his thesis, Coster \cite{coster1988} established such congruences for `generalized Ap{\'e}ry numbers', that is 
\begin{equation}
 u^{(r,s,\varepsilon)}_n = \sum_{k=0}^{n} \binom{n}{k}^r \binom{n+k}{k}^s \varepsilon^k
\end{equation}
for $\varepsilon \in \{\pm 1\}$ and $r,s \ge 0$. Specifically, he proved that $ u^{(r,s,\varepsilon)}$ satisfies the Gauss congruences of order 3 as long as $r \ge 2$, or $r=1$, $s \ge 1$ and $\varepsilon=-1$. The sequences $u^{(r,s,\varepsilon)}$ include, as special cases, the sequences $(\gamma)$ and $\mathbf{D}$, as well as $\mathbf{A}$ and $s_{10}$.
Coster's method for proving these congruences was extended and used by various authors:
\begin{itemize}
	\item Osburn and Sahu \cite{osburn2011} proved that $\mathbf{C}$ satisfies the Gauss congruences of order $3$. 
	\item Later, Osburn and Sahu \cite{osburn2013} proved that $(\alpha)$ satisfies the Gauss congruences of order $3$ as well, extending a result of Chan, Cooper and Sica \cite{chan2010}. They also sketched the proof of a similar result for $\mathbf{E}$ (using the representation \eqref{eq:new e}).
	\item Osburn, Sahu and Straub \cite{osburn2016} proved that $s_{18}$ satisfies the Gauss congruences of order 3, as well as a family that includes $s_{7}$, $\mathbf{D}$ and $(\epsilon)$. In \cite[Ex.~3.1]{osburn2016} they sketch a proof of a similar result for $(\eta)$.
\end{itemize}
All these sequences involved only one summation variable, $k$. Recently, the author established that the sequence $(\zeta)$, whose definition involves summation over two variables $k$ and $l$, satisfies the Gauss congruences of order 3 \cite[Cor.~1.4]{gorodetsky2019}. Although the proof uses $q$-congruences, this can be avoided, and one can formulate the proof using Coster's method.

This leaves three sequences for which little is known: $\mathbf{B}$, $\mathbf{F}$ and $(\delta)$. It was proved by Amdeberhan and Tauraso \cite{amdeberhan2016} that $(\delta)$ satisfies the Gauss congruences of order 3, but only in the special case where $k=1$ in \eqref{eq:gauss cong}. The main difficulty in dealing with $\mathbf{B}$, $\mathbf{F}$ and $(\delta)$ has to do with the appearance of powers (of $3$ and $8$) in their definition. As opposed to binomial coefficients, which behave nicely modulo high powers of primes, powers' behavior is much worse. For instance, $u_n=2^n$ satisfies the Gauss congruences of order 1 but not of higher order, as opposed to $u_n = \binom{2n}{n}$. In fact, it is not even known if there are infinitely many primes $p$ such that $2^{p} \equiv 2^1 \bmod {p^2}$; such primes are known as Wieferich primes. We prove the following.
\begin{thm}\label{thm:super}
Sequence $\mathbf{B}$ satisfies the Gauss congruences of order 2.
\end{thm}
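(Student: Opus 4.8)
The plan is to work from the new binomial representation of $\mathbf{B}$ in Proposition~\ref{prop:formulas}. Writing $T(n) = \sum_{(\mathbf{a},\mathbf{b},\mathbf{c}) \in S(n)} \binom{n}{\mathbf{a}}\binom{n}{\mathbf{b}}\binom{n}{\mathbf{c}}$ and $M(n) = \binom{3n}{n,n,n}$, that proposition reads $2(-1)^n B_n = 3T(n) - M(n)$. Since every prime $p \ge 3$ is odd, both $p^k$ and $p^{k-1}$ are odd, so the signs agree, $(-1)^{np^k} = (-1)^{np^{k-1}} = (-1)^n$, and moreover $2$ is a unit modulo $p^{2k}$. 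Hence it suffices to prove the two congruences
\begin{equation}
M(np^k) \equiv M(np^{k-1}) \bmod p^{2k}, \qquad 3\bigl(T(np^k) - T(np^{k-1})\bigr) \equiv 0 \bmod p^{2k}.
\end{equation}
For the second I would use the order-$2$ Gauss congruences for $T$ itself when $p \ge 5$, where $3$ is a unit, while for $p = 3$ the explicit factor $3$ supplies one extra power of $p$, so that $T(n3^k) \equiv T(n3^{k-1}) \bmod 3^{2k-1}$ already suffices; in both cases the order-$2$ congruence for $T$ is exactly what is needed.

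The factor $M(n) = \binom{3n}{n,n,n} = \binom{3n}{n}\binom{2n}{n}$ is the easy part. Each of $\binom{3n}{n}$ and $\binom{2n}{n}$ is a generalized central binomial satisfying the Jacobsthal-type Gauss congruences of order $3$ for $p \ge 5$ and of order $2$ at $p = 3$ (the latter reflecting the failure of the Wolstenholme congruence modulo $27$). Since Gauss congruences of a fixed order are preserved under multiplication of sequences, $M$ satisfies order-$2$ Gauss congruences for all $p \ge 3$. This also explains why Theorem~\ref{thm:super} is stated at order $2$ rather than $3$: at $p = 3$ the bottleneck is precisely $M$.

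The heart of the argument is the order-$2$ Gauss congruence for $T$, which I would establish by Coster's method. Fixing $p$ and inducting on $k$, I substitute $n \mapsto np$ in the sum defining $T(np^{k-1}\cdot p)$ and split the index set according to the residues modulo $p$ of the entries of the $3\times 3$ array $(\mathbf{a},\mathbf{b},\mathbf{c})$. The main term, consisting of arrays all of whose entries are divisible by $p$, is in bijection — via division by $p$ — with $S(np^{k-1})$: here one must check that the congruence constraint $3 \mid b_2 + 2b_3 + 2c_2 + c_3$ is preserved under the $p$-scaling, which holds because $\gcd(p,3) = 1$ for $p \ge 5$ and is automatic for $p = 3$. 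Applying the multinomial refinement $\binom{Np}{p\mathbf{m}} \equiv \binom{N}{\mathbf{m}} \bmod p^3$ to each of the three factors then reproduces $T(np^{k-1})$ to the required accuracy, and the induction closes once the error terms are controlled.

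The main obstacle is exactly the control of those error terms, namely the arrays in which at least one entry is not divisible by $p$. I would group them into orbits under cyclic shifts of the columns and bound the $p$-adic valuation of each orbit sum using Kummer's theorem, that $v_p\binom{n}{\mathbf{a}}$ counts the base-$p$ carries in summing the parts of $\mathbf{a}$, exploiting that the transportation-polytope constraints force several of the three multinomials to carry simultaneously as soon as one entry escapes divisibility by $p$. The genuinely new difficulty, compared with the single-variable sums treated by Coster and his successors, is that the mod-$3$ selection defining $S(n)$ turns $T(n)$ into a cube-root-of-unity–twisted constant term rather than a plain one; making the valuation bounds reach $p^{2k}$ (respectively $p^{2k-1}$ at $p = 3$) uniformly across all residue patterns, while respecting this congruence filter, is the delicate point on which the whole proof hinges.
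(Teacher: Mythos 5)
Your overall strategy is the same as the paper's: start from Proposition~\ref{prop:formulas}, write $2(-1)^nB_n = 3T(n)-M(n)$, dispose of $M(n)=\binom{3n}{2n}\binom{2n}{n}$ by Jacobsthal's congruence (Lemma~\ref{lem:jacbosthal}), and attack $T$ by sorting the tuples of $S(np^k)$ according to divisibility of their entries by $p$. For $p\ge 5$ your sketch is essentially the paper's proof. The genuine gap is at $p=3$, in your claim that preservation of the constraint $3\mid b_2+2b_3+2c_2+c_3$ under $p$-scaling ``is automatic for $p=3$''. It is automatic only in the upward direction: if $(\mathbf{a},\mathbf{b},\mathbf{c})\in S(n3^{k-1})$ then $3\cdot(\mathbf{a},\mathbf{b},\mathbf{c})\in S(n3^{k})$. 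Your main-term bijection needs the downward direction, and that is exactly what fails: for a tuple $3\cdot(\mathbf{a},\mathbf{b},\mathbf{c})\in S(n3^k)$ the selection constraint reads $3\mid 3(b_2+2b_3+2c_2+c_3)$, which is vacuous and imposes nothing on $(\mathbf{a},\mathbf{b},\mathbf{c})$. Hence the $3$-divisible part of the sum defining $T(n3^k)$ strictly contains the image of $S(n3^{k-1})$, and your decomposition is missing the sum over tuples with $3\cdot(\mathbf{a},\mathbf{b},\mathbf{c})\in S(n3^k)$ but $(\mathbf{a},\mathbf{b},\mathbf{c})\notin S(n3^{k-1})$ --- the term called $S_3$ in the paper. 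This is not a technicality: $p=3$ is the critical prime for sequence $\mathbf{B}$, and this leftover sum is the most delicate part of the paper's proof.

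For comparison, the paper handles that leftover as follows: for $k\ge 2$, the failure of the mod-$3$ condition forces two suitable entries to be prime to $3$, whence each summand is divisible by $3^{2k-2}$; one further factor of $3$ is then gained by grouping tuples into orbits of the cyclic permutation $(\mathbf{a},\mathbf{b},\mathbf{c})\mapsto(\mathbf{b},\mathbf{c},\mathbf{a})$, since the residues $\Delta_i$ of $a_2-a_3$, $b_2-b_3$, $c_2-c_3$ modulo $3$ must form a permutation of $\{0,1,2\}$, so each orbit has size exactly $3$ with equal contributions; this gives $3^{2k-1}\mid S_3$, and the prefactor $3$ finishes. The case $k=1$ needs a separate argument, done via Zagier's original formula $B_n=\sum_k(-1)^k3^{n-3k}\binom{n}{3k}\binom{3k}{2k}\binom{2k}{k}$ and Kummer's theorem. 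Ironically, your idea of grouping terms into cyclic-shift orbits is precisely the right tool, but you aim it at the wrong terms: the tuples with an entry prime to $p$ need no orbit argument at all, since each such summand is individually divisible by $p^{2k}$ (if $p\nmid a_1$, the column relation $a_1+b_1+c_1=np^k$ forces, say, $p\nmid b_1$, and then Lemma~\ref{lem:lower} applied to $\binom{np^k}{a_1}\binom{np^k}{b_1}$ gives $p^{2k}$). A secondary weakness: the plain refinement $\binom{Np}{p\mathbf{m}}\equiv\binom{N}{\mathbf{m}}\bmod p^3$ cannot by itself reach modulus $p^{2k}$ for $k\ge 2$; one needs Jacobsthal in its full strength, with the $v_p(nm(n-m))$ gain, interleaved with Lemma~\ref{lem:lower}, which is how the paper treats the diagonal term $S_2$.
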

This confirms a conjecture of Osburn, Sahu and Straub \cite{osburn2016}. 
\begin{remark}
	Theorem~\ref{thm:newton} also gives a new proof for the fact that all the sporadic sequences are integral and satisfy the Gauss congruences (of order $1$), since these congruences hold for the constant terms of powers of any integral Laurent series.
\end{remark}

\section{Further results}\label{sec:lucas}
\subsection{Lucas and D3 congruences}
 \begin{definition}
	A sequence $\{ u_n\}_{n \ge 1}$ of integers is said to satisfy the Lucas congruences if, for any prime $p$, and any positive integer $n$ with base-$p$ expansion $\sum_{i=0}^{s} n_i p^i$ ($0 \le n_i < p$) we have
	\begin{equation}
	u_{n} \equiv \prod_{i=0}^{s} u_{n_i} \bmod p.
	\end{equation} 
\end{definition}
The name of these congruences comes from a theorem of Lucas \cite{lucas1878}, stating that if $n=\sum_{i=0}^{s} n_i p^i$ and $k=\sum_{i=0}^{s} k_i p^i$ are base-$p$ expansions of $n \ge k \ge0$, then
\begin{equation}
\binom{n}{k} \equiv \prod_{i=0}^{s} \binom{n_i}{k_i} \bmod p.
\end{equation}
Gessel \cite[Thm.~1]{gessel1982}, inspired by the previously mentioned conjectures of Chowla, Cowles and Cowles, has shown that $a_n$ satisfies the Lucas congruences. Deutsch and Sagan \cite[Thm.~5.9]{deutsch2006} showed more generally that for any positive integers $r$ and $s$, the sequence
\begin{equation}
u_n = \sum_{k=0}^{n} \binom{n}{k}^r \binom{n+k}{k}^s
\end{equation}
satisfies the Lucas congruences, which implies that $a_n$ and $b_n$ satisfy the Lucas congruences. They also showed that $u_n=\binom{2n}{n}$ satisfies Lucas congruences \cite[Thm.~4.4]{deutsch2006}. A property stronger than Lucas congruences is the D3 property, named after Dwork \cite{dwork1969}.
\begin{definition}
	Let $\{ u_n\}_{n \ge 1}$ be a sequence of integers, and set $u_0:=1$. We say that $\{u_n\}_{n \ge 1}$ satisfies the D3 congruences with respect to a prime $p$ if
	\begin{equation}\label{eq:d3}
	u_{n+mp^{s}} u_{\lfloor \frac{n}{p} \rfloor} \equiv u_n u_{\lfloor \frac{n+mp^s}{p} \rfloor} \bmod p^s
	\end{equation}
	for all $s,m,n\ge 0$. If this holds for all primes $p$, we simply say that $\{u_n\}_{n \ge 1}$ satisfies the D3 congruences.
\end{definition}
These congruences are implicit in the work of Dwork \cite{dwork1969} and have significance in $p$-adic analysis. We refer the reader to \cite{samol2015,mellit2016} for further information, as well as definitions of related congruences, known as D1 and D2, which together imply D3. One can show that \eqref{eq:d3} with $s=1$ implies the Lucas congruences. Samol and van Straten \cite{samol2015} proved that a certain class of sequences satisfies the D3 congruences. A different proof of this was found by Mellit and Vlasenko \cite{mellit2016}. Before we state their result, we recall that the Newton polytope of a Laurent polynomial $f = \sum_{(a_1,\ldots,a_d) \in \ZZ^d} c_{a_1,\ldots,a_d} x_1^{a_1} \cdots x_d^{a_d} $ is the convex hull of the support of $f$, that is, of $\{ (a_1,\ldots,a_d) \in \ZZ^d : c_{a_1,\ldots,a_d} \neq 0 \}$. We use the notation  $\CT(f)$ for the constant term $c_{0,0,\ldots,0}$ of $f$ (or any Laurent series).
\begin{thm}\label{thm:samol}\cite{samol2015,mellit2016}
	Let $\Lambda \in \ZZ[x_1,x_1^{-1},\ldots,x_d,x_d^{-1}]$ be a Laurent polynomial. If the Newton polytope of $\Lambda$ contains the origin as its only interior integral point, then the sequence $u_n=\CT(\Lambda^n)$ satisfies the D3 congruences.
\end{thm}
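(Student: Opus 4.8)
The plan is to reconstruct the algebraic (non-geometric) proof of Mellit and Vlasenko. The key realization is that the constant term $u_n = \CT(\Lambda^n) = a_n(0)$, where $a_n(w) := [x^w]\Lambda^n$ denotes the coefficient of $x_1^{w_1}\cdots x_d^{w_d}$ in $\Lambda^n$, cannot be studied in isolation: the Frobenius contraction $w \mapsto pw$ couples the origin to all other lattice points. I would therefore prove a congruence for the entire array $\{a_n(w)\}_{w \in \ZZ^d}$ and recover \eqref{eq:d3} by specializing $w = 0$. The two operators driving everything are the Cartier operator $U$, defined by $U(\sum_v b_v x^v) = \sum_v b_{pv} x^v$, and the Verschiebung $V$, defined by $(Vg)(x) = g(x^p)$; they satisfy $UV = \mathrm{id}$, the projection formula $U(g \cdot Vh) = (Ug)\cdot h$, and, most importantly, the Frobenius congruence $\Lambda^p \equiv V\Lambda \pmod p$, which holds because $(\sum c_v x^v)^p \equiv \sum c_v x^{pv} \pmod p$.

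\emph{The operator identity.} Writing $n = pq + r$ with $0 \le r < p$ and $E := (\Lambda^p - V\Lambda)/p \in \ZZ[x_1^{\pm},\ldots,x_d^{\pm}]$, I would expand $\Lambda^n = \Lambda^r (V\Lambda + pE)^q$ binomially, use that $V$ is a ring homomorphism, and apply $U$ via the projection formula to obtain
\begin{equation}
U(\Lambda^n) = \sum_{j \ge 0} \binom{q}{j} p^j \, U(\Lambda^r E^j)\, \Lambda^{q-j}.
\end{equation}
Reading off the coefficient of $x^w$ gives a recursion expressing $a_n(pw)$ through the coefficients of $\Lambda^{q-j}$, with the $j$-th term carrying the explicit factor $\binom{q}{j}p^j$. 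This is the engine of the induction: the terms with $j \ge 1$ are automatically divisible by $p$, so modulo a fixed power of $p$ only finitely many of them survive.

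\emph{Where the hypothesis enters.} The crux is a support lemma extracted from the geometry: \emph{if the Newton polytope $\Delta$ of $\Lambda$ has the origin as its only interior integral point, then $a_r(pu) = 0$ for every $0 \le r < p$ and every $u \in \ZZ^d \setminus \{0\}$.} Indeed $\mathrm{supp}(\Lambda^r) \subseteq r\Delta$, so $a_r(pu) \neq 0$ forces $u \in \tfrac{r}{p}\Delta$; since $0 \in \Delta$ and $r/p < 1$, convexity places $u$ in the interior of $\Delta$, whence $u = 0$ by hypothesis. Feeding this lemma into the $j = 0$ part of the operator identity collapses the convolution to a single term and yields the level-one array relation $a_{pq+r}(pw) \equiv u_r\, a_q(w) \pmod p$, which already contains the Lucas congruences (take $w=0$). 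I expect this lemma, and more precisely the bookkeeping needed to keep the error terms supported where the polytope condition can still control them at every level $s$, to be the main obstacle; it is exactly this point that breaks down for $(\eta)$ in Theorem~\ref{thm:newton}.

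\emph{Conclusion by induction on $s$.} With the operator identity and the support lemma in hand, I would prove the full family of array congruences modulo $p^s$ by induction on $s$: the inductive hypothesis controls the $j=0$ main term through the level-$(s-1)$ relations, while the factors $p^j$ on the remaining terms allow the corrections $U(\Lambda^r E^j)$ to be handled only modulo $p^{s-j}$. Specializing the resulting array congruence to $w = 0$ and matching the indices $n$, $n+mp^s$, $\lfloor n/p\rfloor$, $\lfloor (n+mp^s)/p\rfloor$ reproduces \eqref{eq:d3}. Since no step uses anything about $p$ beyond $\Lambda \in \ZZ[x_1^{\pm},\ldots,x_d^{\pm}]$, the congruences hold for every prime $p$, giving the D3 congruences as stated.
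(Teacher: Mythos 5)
First, a point of order: the paper does not prove Theorem~\ref{thm:samol} at all --- it is quoted with citations to Samol--van Straten and Mellit--Vlasenko --- so there is no in-paper proof to compare against, and your reconstruction must be judged against the Mellit--Vlasenko argument you explicitly set out to follow. Up to the point where you stop, you follow it faithfully and correctly: the operators $U,V$ with $UV=\mathrm{id}$ and the projection formula, the integrality of $E=(\Lambda^p-V\Lambda)/p$, the expansion $U(\Lambda^{pq+r})=\sum_{j}\binom{q}{j}p^j\,U(\Lambda^r E^j)\,\Lambda^{q-j}$, and the support lemma are all right, and the support lemma is indeed exactly where the polytope hypothesis enters. (One small fix: your convexity step needs $0\in\mathrm{int}\,\Delta$, not merely $0\in\Delta$; this is harmless here because the hypothesis asserts the origin \emph{is} an interior point.) Specializing the $j=0$ term at $w=0$ does give the Lucas congruences.

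The genuine gap is the closing induction, which is asserted rather than formulated, and it is not a routine omission: formulating the inductive statement is the actual content of the cited proof. The structural problem is that \eqref{eq:d3} is \emph{quadratic} in the sequence while your operator identity is \emph{linear} in the array, and the two do not meet the way your last paragraph suggests. Concretely, take $s=2$, $n=pq+r$, and write $e_r(v):=[x^v]U(\Lambda^r E)$; the identity gives $u_n\equiv u_ru_q+pq\sum_v e_r(v)\,a_{q-1}(-v)\pmod{p^2}$, whence
\begin{equation}
u_{n+mp^2}\,u_{q}-u_n\,u_{q+mp}\equiv pq\sum_{v}e_r(v)\bigl(u_q\,a_{q-1+mp}(-v)-u_{q+mp}\,a_{q-1}(-v)\bigr)\pmod{p^2}.
\end{equation}
So \eqref{eq:d3} modulo $p^2$ already requires the bilinear mod-$p$ congruence $u_q\,a_{q-1+mp}(-v)\equiv u_{q+mp}\,a_{q-1}(-v)$ (at least after summing against $e_r(v)$): a D3-type statement for the array at the \emph{nonzero} weights $-v$. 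Your level-one relation $a_n(pw)\equiv u_r a_q(w)\bmod p$ only controls coefficients at exponents in $p\ZZ^d$, so it cannot reach $a_{q-1}(-v)$; and the naive linear lift $a_n(pw)\equiv u_r a_q(w)\bmod p^s$ is false already for $s=2$, $w=0$, since Lucas congruences fail modulo $p^2$. The inductive hypothesis therefore has to be a genuinely bilinear statement about all the coefficient series $F_w(x)=\sum_n a_n(w)x^n$ simultaneously --- in Mellit--Vlasenko it is a Dwork-type congruence between products of such series, combined with the bookkeeping that confines the relevant weights to finitely many lattice points of dilates of $\Delta$ --- and nothing in your outline states or supplies it. Relatedly, you locate the ``main obstacle'' at the support lemma, but that is the easy two-line part you already completed; the obstacle is this bilinear induction. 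As it stands, the proposal reconstructs the first half of the proof and leaves a promissory note precisely where the theorem gets proved.
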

As observed e.g. by Mellit and Vlasenko \cite[\S1]{mellit2016}, $b_n$ is the constant term of the $n$th power of
\begin{equation}\label{eq:meldiag2}
\Lambda = \frac{(1+x_1)(1+x_2)(1+x_1+x_2)}{x_1 x_2}.
\end{equation}
Straub \cite[Rem.~1.4]{straub2014} observed that $a_n$ is the constant term of the $n$th power of
\begin{equation}\label{eq:straubdiag3}
\Lambda = \frac{(x_1+x_2)(x_3+1)(x_1+x_2+x_3)(x_2+x_3+1)}{x_1x_2x_3}.
\end{equation}
A short computation shows that the Newton polytopes of \eqref{eq:meldiag2} and \eqref{eq:straubdiag3} satisfy the conditions of Theorem~\ref{thm:samol}, and so $a_n$ and $b_n$ satisfy the D3 congruences. 
\subsection{D3 congruences for sporadic sequences}
Malik and Straub \cite[Thm.~3.1]{malik2016} proved that all 15 Ap\'{e}ry-like sequences satisfy the Lucas congruences. For all sequences except for $(\eta)$ and $s_{18}$ they did this by extending and applying a general method developed by McIntosh \cite{mcintosh1992}, while for $(\eta)$ and $s_{18}$ a much more delicate analysis was needed. From Theorems~\ref{thm:samol} and \ref{thm:newton} we immediately obtain
\begin{cor}
All the sporadic sequences, except possibly $(\eta)$, satisfy the D3 congruences.
\end{cor}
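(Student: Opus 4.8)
The statement to prove is that all sporadic sequences, except possibly $(\eta)$, satisfy the D3 congruences. The plan is to combine the two theorems already established in the excerpt in the most direct way possible, so the proof is essentially a one-line deduction once the inputs are in place. Concretely, I would invoke Theorem~\ref{thm:newton} to obtain, for each of the 14 sporadic sequences other than $(\eta)$, a Laurent polynomial $\Lambda \in \ZZ[x_1,x_1^{-1},\ldots,x_d,x_d^{-1}]$ whose constant term sequence $\CT(\Lambda^n)$ is exactly that sporadic sequence, and whose Newton polytope contains the origin as its only interior integral point. The existence of such a $\Lambda$ for each of these 14 sequences is precisely the content of the first and third assertions of Theorem~\ref{thm:newton} (the third assertion excludes only $(\eta)$).

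With such a $\Lambda$ in hand, I would then apply Theorem~\ref{thm:samol} verbatim: its hypotheses are exactly that $\Lambda$ is an integral Laurent polynomial whose Newton polytope has the origin as its unique interior integral point, and its conclusion is that $u_n = \CT(\Lambda^n)$ satisfies the D3 congruences. Since by construction $u_n$ is the sporadic sequence in question, the conclusion transfers immediately. Running this argument over each of the 14 sequences separately (the choice of $\Lambda$, and in particular the dimension $d$, depending on the sequence) yields the claim for all of them at once.

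There is really no substantive obstacle at the level of the Corollary itself: all the difficulty has been front-loaded into Theorem~\ref{thm:newton}, whose proof must exhibit the explicit Laurent polynomials and verify the Newton-polytope condition for each sequence. The only point requiring care is the logical matching of hypotheses, namely checking that the polynomials furnished by Theorem~\ref{thm:newton} are simultaneously the ones with the desired constant-term representation \emph{and} the ones satisfying the interior-point condition; since Theorem~\ref{thm:newton} asserts that the $\Lambda$ may be chosen to satisfy all the stated properties at once, this matching is automatic. The sequence $(\eta)$ is excluded precisely because Theorem~\ref{thm:newton} does not (currently) guarantee a representation for it whose Newton polytope meets the interior-point hypothesis of Theorem~\ref{thm:samol}, so the method says nothing about it.
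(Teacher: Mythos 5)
Your proof is correct and is exactly the paper's own argument: the corollary is stated as an immediate consequence of combining Theorem~\ref{thm:newton} (which supplies, for each sequence except $(\eta)$, an integral Laurent polynomial representation whose Newton polytope has the origin as its only interior integral point) with Theorem~\ref{thm:samol}. No further commentary is needed.
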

\subsection{\texorpdfstring{$p$}{p}-adic valuation of sporadic sequences}
Given a sequence of integers $\mathbf{u}=\{u_n\}_{n \ge 0}$ and a prime $p$, let
$\mathcal{Z}_p(\mathbf{u})=\{ 0 \le i \le p-1: p \mid u_i\}$. For any non-negative integer $n$ whose base-$p$ expansion is $n=\sum_{i=0}^{N} n_i p^i$, let $\alpha_p(\mathbf{u},n)=\#\{ 0\le i \le N: n_i \in \mathcal{Z}_p(\mathbf{u})\}$.

Let $v_p(n)$ be the $p$-adic valuation of an integer $n$. Recently, Delaygue proved that $v_p(u_n) \ge \alpha_p(\mathbf{u},n)$ under two conditions on $\{u_n\}_n$ \cite[Thm.~2]{delaygue2018}. The first condition is that $u_n = \CT(\Lambda^n)$ for some Laurent polynomial $\Lambda$ with integer coefficients, whose Newton polytope contains the origin as its only interior integral point. This holds for all the sporadic sequences except possibly $(\eta)$. The second condition involves a restriction on the differential operator annihilating $\sum_{n\ge 0} u_n z^n$. According to \eqref{eq:diff2}, \eqref{eq:diff3} and \cite[p.~23]{malik2016}, this condition holds for all 15 sequences. We thus obtain
\begin{cor}
	Every sporadic sequence $\{u_n\}_n$, except possibly $(\eta)$, satisfies $v_p(u_n) \ge \alpha_p(\mathbf{u},n)$ for every $n \ge 1$ and every prime $p$.
\end{cor}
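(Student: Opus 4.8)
The plan is to deduce the statement directly from Delaygue's valuation theorem \cite[Thm.~2]{delaygue2018}, whose conclusion is precisely the desired inequality $v_p(u_n) \ge \alpha_p(\mathbf{u},n)$, by checking that each of the $14$ sporadic sequences other than $(\eta)$ satisfies its two hypotheses. Since the theorem is being invoked verbatim, no new congruence machinery is required; the whole task reduces to certifying the two input conditions and then quoting the result.

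For the first hypothesis --- that $u_n=\CT(\Lambda^n)$ for an integral Laurent polynomial $\Lambda$ whose Newton polytope contains the origin as its unique interior integral point --- I would simply invoke Theorem~\ref{thm:newton}. Its final clause furnishes, for every sporadic sequence except possibly $(\eta)$, a Laurent polynomial with exactly this Newton-polytope property, and the explicit choices are recorded in Proposition~\ref{prop:rep}. This is also why $(\eta)$ must be excluded: Theorem~\ref{thm:newton} does not (yet) supply a representation with the interior-point property for it, so the first hypothesis of \cite[Thm.~2]{delaygue2018} cannot be certified in that case, and the corollary is stated with the matching caveat.

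For the second hypothesis --- a structural restriction on the differential operator annihilating $\sum_{n \ge 0} u_n z^n$ --- I would read the operator off \eqref{eq:diff2} in the order-$2$ case and off \eqref{eq:diff3} in the order-$3$ case. The feature I expect to be relevant is that $z=0$ is a point of maximal unipotent monodromy: the indicial polynomial there is $\theta^2$, respectively $\theta^3$, so all local exponents at the origin vanish and the operator has the normalized shape Delaygue requires. I would confirm that this matches his precise condition and cross-check it against the explicit verification carried out for all fifteen operators on \cite[p.~23]{malik2016}.

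The only step calling for genuine care is translating the exact wording of Delaygue's second condition into a statement about the operators in \eqref{eq:diff2} and \eqref{eq:diff3}; this is pure bookkeeping, and once it is done both hypotheses hold simultaneously for each of the $14$ sequences, so the corollary follows at once. In particular, no sequence-by-sequence analysis is needed beyond reading the polytope property from Theorem~\ref{thm:newton} and the operator property from the two displayed differential equations.
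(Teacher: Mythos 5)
Your proposal is correct and follows essentially the same route as the paper: both hypotheses of Delaygue's theorem \cite[Thm.~2]{delaygue2018} are certified exactly as the paper does, the polytope condition via Theorem~\ref{thm:newton} (with $(\eta)$ excluded for precisely the reason you give) and the operator condition via \eqref{eq:diff2}, \eqref{eq:diff3} and \cite[p.~23]{malik2016}. Your added gloss that the operator condition amounts to all local exponents at $z=0$ vanishing (indicial polynomial $\theta^2$ or $\theta^3$) is a correct reading of the displayed equations and only makes explicit what the paper leaves implicit.
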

\section{Constant term and binomial representations for sporadic sequences}\label{sec:constant}
\subsection{Review of diagonals, constant terms and binomial representations}
Given a power series $A(x_1,\ldots,x_d)=\sum_{(a_1,\ldots,a_d) \in \ZZ_{\ge 0}^d} c_{a_1,a_2,\ldots,a_d} x_1^{a_1} x_2^{a_2} \cdots x_d^{a_d} \in \CC[[x_1,\ldots,x_d]]$, we define its diagonal as the power series
\begin{equation}
\Delta_d(A) := \sum_{i \ge 0} c_{i,i,\ldots,i} u^i \in \CC[[u]].
\end{equation} 
Given a rational function $A(x_1,\ldots,x_d) \in \CC(x_1,\ldots,x_d)$ whose denominator does not vanish at the origin, we can identify it with its Taylor expansion around $\vec{0}$. For such $A$ we have the following result of Lipshitz.
\begin{thm}\label{thm:lip}\cite{lipshitz1988}
Let $A \in K(x_1,\ldots,x_d)$ ($K$ a field of characteristic $0$). If the denominator of $A$ does not vanish at the origin, then the diagonal $\Delta_d(A)$ satisfies a homogeneous linear differential equation with coefficients in $K[u]$.
\end{thm}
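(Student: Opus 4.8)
The statement is Lipshitz's theorem that the diagonal of a rational power series is D-finite. The plan is to work with the standard notion of \emph{D-finiteness}: a power series (or rational function) $f$ in variables $x_1,\dots,x_d$ is D-finite over $K(x_1,\dots,x_d)$ if the $K(x_1,\dots,x_d)$-vector space spanned by $f$ together with all of its iterated partial derivatives $\partial_{x_1}^{\alpha_1}\cdots\partial_{x_d}^{\alpha_d} f$ is finite-dimensional. In the single-variable case this coincides with satisfying a homogeneous linear ODE with coefficients in $K[u]$, which is exactly the desired conclusion. The first observation is that a rational function $A \in K(x_1,\dots,x_d)$ is trivially D-finite: every partial derivative of $A$ again lies in $K(x_1,\dots,x_d)$, so the span in question is at most one-dimensional. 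Thus the entire content of the theorem is that \emph{the diagonal operation preserves D-finiteness}, and it suffices to prove that statement for D-finite inputs.

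Second, I would reduce the full diagonal $\Delta_d$ to an iteration of a two-variable \emph{partial diagonal}. For a series $f(x_1,x_2,x_3,\dots,x_d)$, define the partial diagonal in $x_1,x_2$ by merging those two exponents into a single new variable $y$; the full diagonal is then obtained by applying such a partial diagonal $d-1$ times, each time merging the freshly created variable with the next $x_i$. After substituting $x_1=t$ and $x_2=y/t$, extraction of the $t^0$-coefficient realizes the partial diagonal as a formal residue
\begin{equation}
\oint f\Bigl(t,\tfrac{y}{t},x_3,\dots,x_d\Bigr)\,\frac{dt}{t},
\end{equation}
since the substitution turns $x_1^a x_2^b$ into $t^{a-b}y^b$ and the residue selects the terms with $a=b$. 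Regarding $f(t,y/t,\dots)$ as a Laurent series in $t$, the substitution $x_1\mapsto t$, $x_2\mapsto y/t$ is by rational functions, under which D-finiteness is preserved; hence the crux is to show that the residue—equivalently, coefficient extraction in the single variable $t$—sends a D-finite series to a D-finite series in the remaining variables.

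For this integration step I would argue by finiteness. Extraction of the $t^0$-coefficient commutes with $\partial_y,\partial_{x_3},\dots,\partial_{x_d}$, so every partial derivative of the partial diagonal is the residue of the corresponding derivative of the integrand $g:=f(t,y/t,\dots)$. All these derivatives lie in the fixed finite-dimensional space $V$ spanned over $K(t,y,x_3,\dots,x_d)$ by $g$ and its derivatives. The difficulty is that $V$ is finite-dimensional over $K(t,y,x_3,\dots,x_d)$ but a priori infinite-dimensional over $K(y,x_3,\dots,x_d)$, whereas we need finitely many independent residues over the latter field. The resolution, which is the technical heart of Lipshitz's argument, is a creative-telescoping / integration-by-parts computation: using that $V$ is finite-dimensional and closed under $\partial_t$, one shows that every derivative $\partial^\beta g$ can be written as a $K(y,x_3,\dots,x_d)$-linear combination of finitely many fixed derivatives $\partial^{\beta_1}g,\dots,\partial^{\beta_m}g$ modulo a $\partial_t$-exact term. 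Since a $\partial_t$-exact term has zero residue, taking residues collapses the infinitely many $\partial^\beta g$ to a finite spanning set, so the residues form a finite-dimensional $K(y,x_3,\dots,x_d)$-space closed under the remaining derivatives. This is precisely the D-finiteness of the partial diagonal, and producing the required telescoping relation—controlling the $t$-degrees so that only finitely many $\beta_j$ occur—is the main obstacle.

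Finally, I would assemble these pieces by induction on the number of variables: starting from the rational function $A$, which is D-finite by the first step, and applying the partial-diagonal lemma $d-1$ times, each application preserving D-finiteness, one arrives at a D-finite power series in the single variable $u$. By the one-variable characterization noted at the outset, this series satisfies a homogeneous linear differential equation with coefficients in $K[u]$, as claimed. A minor point, handled at the end, is that although the residue is cleanest to picture analytically over $\CC$, every operation above is the formal extraction of a Taylor coefficient and so makes sense over an arbitrary field $K$ of characteristic zero; alternatively one reduces to $K=\CC$, since only finitely many coefficients of $A$ are involved and they generate a subfield of $\CC$.
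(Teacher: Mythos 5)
The paper itself gives no proof of this statement: it is quoted as a known theorem, with the citation \cite{lipshitz1988} serving as the proof. So the only meaningful comparison is with Lipshitz's own argument, and your outline does reproduce its architecture faithfully: introduce D-finiteness, observe that rational functions are trivially D-finite, factor the full diagonal $\Delta_d$ into $d-1$ primitive (two-variable) diagonals, realize each primitive diagonal as a formal residue of $f(t, y/t, x_3,\ldots,x_d)$, and reduce everything to the claim that residue extraction preserves D-finiteness.

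However, there is a genuine gap exactly at the step you yourself flag as ``the main obstacle'': you assert that, because $V$ is finite-dimensional over $K(t,y,x_3,\ldots,x_d)$ and closed under $\partial_t$, every derivative $\partial^\beta g$ is congruent, modulo a $\partial_t$-exact term, to a combination of finitely many fixed derivatives with coefficients free of $t$ --- but nothing in your text establishes this, and it does not follow formally from finite-dimensionality of $V$: finite dimension over $K(t,y,\ldots)$ does not bound the $t$-degrees that appear when you try to express residues over the smaller field $K(y,x_3,\ldots,x_d)$. This telescoping lemma is the entire content of Lipshitz's theorem, and he proves it by a quantitative counting argument that your sketch omits: one considers the elements $y^a\partial_y^b\partial_t^c g$ (say with $a,b,c \le N$), notes that after clearing a common denominator they lie, expressed in a fixed basis of $V$, in a space of coefficient vectors whose dimension grows like a \emph{smaller} power of $N$ than the number $(N+1)^3$ of operator monomials (the degree bounds on the coefficients grow only linearly in $N$); for $N$ large this forces a nonzero operator $P$ with coefficients \emph{free of $t$} annihilating $g$, which --- precisely because its coefficients are $t$-free --- can be split as $P = P_0(y,\partial_y,\ldots) + \partial_t Q$, and taking residues then kills the exact term and shows $P_0$ annihilates the diagonal (with some care about the degenerate case $P_0 = 0$ and about taking the residue of $g\,dt/t$ rather than $g\,dt$). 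Without this counting step, your argument reduces Lipshitz's theorem to its own key lemma rather than proving it. A secondary, more minor gap of the same kind: the claim that the rational substitution $x_1 \mapsto t$, $x_2 \mapsto y/t$ preserves D-finiteness is also a lemma requiring proof (chain rule plus clearing denominators), though that one is routine.
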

Given a Laurent series $f(x_1,\ldots,x_d) = \sum_{(a_1,\ldots,a_d) \in \ZZ^d} c_{a_1,a_2,\ldots,a_d} x_1^{a_1} x_2^{a_2} \cdots x_d^{a_d} \in \CC[[x_1,\ldots,x_d,x_1^{-1},\ldots,x_d^{-1}]]$, we define its constant term series as the power series
\begin{equation}
\CTS(f) := \sum_{i \ge 0} \CT(f^i) u^i \in \CC[[u]].
\end{equation}
The constant term series of $f$ is also known as its \emph{fundamental period}.
Suppose a Laurent polynomial $A(\mathbf{x})$ in $d$ variables becomes a polynomial when multiplied by $\prod_{i=1}^{d} x_i$. Then it is known that $A$ can be expressed as the diagonal of a rational function in $d+1$ variables, namely
\begin{equation}\label{eq:cts to diag}
\CTS( A)=\Delta_{d+1}\left( \frac{1}{1-x_1 x_2 \ldots x_{d+1} A(x_1,\ldots, x_d)} \right).
\end{equation}
In general, the diagonal of a rational function cannot be expressed as the constant term series of a Laurent polynomial. However, we have the following partial converse. This converse is a special case of the MacMahon Master Theorem \cite{macmahon1960}.
\begin{lem}\label{cor:diagtocts}
Given a square matrix $M=(M_{i,j})_{1 \le i,j \le n} \in \mathrm{Mat}_d(\CC)$, consider the rational function
\begin{equation}
A(x_1,\ldots,x_d) := \frac{1}{\det(I_d - M\cdot \mathrm{Diag}(x_1,\ldots,x_d))}.
\end{equation}
We have
\begin{equation}\label{eq:Delta as CT}
\Delta_{d}(A) = \CTS\left( \frac{ \prod_{i=1}^{d} (\sum_{j=1}^{d} M_{i,j} x_j)}{x_1 x_2 \cdots x_d} \right).
\end{equation}
As the rational function in the right-hand side of \eqref{eq:Delta as CT} is homogeneous of degree $0$, we may substitute $x_i=1$ for some $i$ without changing the constant term series.
\end{lem}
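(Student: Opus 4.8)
The plan is to read the identity off directly from the MacMahon Master Theorem, after translating diagonal coefficient extraction into constant-term extraction. First I would fix notation: write $X = \mathrm{Diag}(x_1,\ldots,x_d)$ and, for each $i$, let $L_i := \sum_{j=1}^d M_{i,j} x_j$ be the $i$th linear form appearing in the numerator of the right-hand side. In this language the Master Theorem \cite{macmahon1960} states that for every tuple $(m_1,\ldots,m_d) \in \ZZ_{\ge 0}^d$, the coefficient of $x_1^{m_1} \cdots x_d^{m_d}$ in $1/\det(I_d - MX)$ equals the coefficient of the same monomial in $\prod_{i=1}^d L_i^{m_i}$. Since $\det(I_d - MX) = \det(I_d - M\cdot\mathrm{Diag}(x_1,\ldots,x_d))$, the left-hand side here is exactly the Taylor coefficient $c_{m_1,\ldots,m_d}$ of $A$ around the origin.

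Next I would specialize to the diagonal by setting $m_1 = \cdots = m_d = i$. The diagonal coefficient $c_{i,i,\ldots,i}$ that feeds into $\Delta_d(A)$ then equals the coefficient of $x_1^i \cdots x_d^i$ in $\prod_{k=1}^d L_k^i = (\prod_{k=1}^d L_k)^i$. The one observation doing the real work is that extracting the coefficient of $x_1^i \cdots x_d^i$ from a Laurent polynomial $P$ is the same as extracting the constant term of $P/(x_1\cdots x_d)^i$. Applying this with $P = (\prod_k L_k)^i$ gives $c_{i,\ldots,i} = \CT\!\left( \left( \prod_{k=1}^d L_k \big/ (x_1\cdots x_d) \right)^{\!i} \right)$, which is precisely the $i$th constant-term coefficient of $f := \prod_{k=1}^d L_k / (x_1 \cdots x_d)$. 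Summing $c_{i,\ldots,i}\,u^i$ over $i \ge 0$ then yields $\Delta_d(A) = \CTS(f)$, which is the asserted equation \eqref{eq:Delta as CT}. I do not expect a genuine obstacle here: the Master Theorem supplies all the combinatorial content, and the only point requiring care is keeping the index conventions straight and confirming the coefficient-to-constant-term dictionary in the specialized (diagonal) case.

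Finally I would justify the closing remark about substituting $x_i = 1$. The numerator $\prod_{k=1}^d L_k$ is homogeneous of degree $d$ and the denominator $x_1 \cdots x_d$ of degree $d$, so $f$ is homogeneous of degree $0$, and hence so is every power $f^n$. Consequently every monomial $x_1^{a_1} \cdots x_d^{a_d}$ occurring in $f^n$ satisfies $a_1 + \cdots + a_d = 0$. Setting, say, $x_d = 1$ collapses such a monomial to $x_1^{a_1}\cdots x_{d-1}^{a_{d-1}}$, whose contribution to $\CT(f^n|_{x_d = 1})$ survives exactly when $a_1 = \cdots = a_{d-1} = 0$; but the degree-zero relation then forces $a_d = 0$ as well, so only the genuine constant term of $f^n$ contributes. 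Thus $\CT(f^n|_{x_d=1}) = \CT(f^n)$ for all $n$, and the constant-term series is unaffected by the substitution.
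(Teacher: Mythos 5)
Your proof is correct and follows essentially the same route as the paper, which states the lemma without further proof as a special case of the MacMahon Master Theorem \cite{macmahon1960}: you have simply made explicit the coefficient extraction, the specialization to the diagonal $m_1=\cdots=m_d=i$, the dictionary between the coefficient of $(x_1\cdots x_d)^i$ and the constant term of $f^i$, and the degree-zero homogeneity argument for setting $x_i=1$. No gaps.
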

There are various benefits that come from representing a sequence as the diagonal of a rational function, some of which are described in \cite[p.~1987]{straub2014}. For instance, it allows one to extract the asymptotics of the sequence directly from the rational function.
 A much simpler benefit, which we make use of in Proposition~\ref{prop:formulas}, is that it allows us to easily extract binomial sum representations for the sequence\footnote{An example of a binomial sum is \eqref{eq:binom sums}. The interested reader may find a rigorous definition of `binomial sums' in \cite{bostan2017}.}.
\subsection{Known binomial sum representations}
In \cite[\S7]{zagier2009}, Zagier explains how the modular parametrization of a sequence can be used to obtain binomial representations for it. In practice, many of the binomial representations are found using educated guesses or computer searches. 

Binomial representations for $\mathbf{A}$ and $(\gamma)$ can be found in \cite{strehl1994, schmidt1995}, for $(\delta)$, $(\alpha)$ and $(\gamma)$ can be found in \cite{chan20102} and for $(\alpha)$ and $(\gamma)$ can be found in \cite{chan2011}. Additional representations for $\mathbf{A}$, $\mathbf{D}$, $\mathbf{E}$ and $(\alpha)$ are found in their OEIS pages \cite{oeis}, and we single out the following representation for $\mathbf{E}$,
\begin{equation}\label{eq:new e}
u_n = \sum_{k=0}^{n} \binom{n}{k}\binom{2k}{k} \binom{2n-2k}{n-k}.
\end{equation}
This formula for $\mathbf{E}$ was used in \cite{osburn2013} in establishing Gauss congruences of order 2; the original representation of $\mathbf{E}$, given by Zagier, involves powers of $4$, which get in the way of establishing supercongruences. A proof of \eqref{eq:new e} is sketched in \cite[p.~257]{delaygue2018}. 
\subsection{Known diagonal representations}\label{sec:rational examples}
Bostan, Lairez and Salvy \cite[Thm.~3.5]{bostan2017} proved (algorithmically) that if $\{u_n\}_{n \ge 0}$ is a binomial sum then the g.f. of $u_n$ is a diagonal of a rational function. Hence, every sporadic sequence is the diagonal of some rational function. We are aware of the following explicit diagonal representations for the 10 sequences $\mathbf{A}$, $\mathbf{B}$, $\mathbf{C}$, $\mathbf{D}$, $\mathbf{F}$, $(\gamma)$, $(\delta)$, $(\alpha)$, $(\epsilon)$, $s_7$ and $s_{10}$:
\begin{itemize}
	\item Zagier \cite{zagier2009} (cf. Verrill \cite{verrill1999}) already explained how his Sporadic sequences have constant term representations, but this has been worked out only in a few cases: $\mathbf{A}$ is the constant term sequence of 
\begin{equation}\label{eq:zagahomog}
	\frac{(x_1+x_2)(x_2+x_3)(x_3+x_1)}{x_1 x_2 x_3}
\end{equation}
and so of $(x_1+x_2)(x_2+1)(x_1+1)/x_1x_2$ as well, $\mathbf{B}$ is the constant term sequence of 
	\begin{equation}\label{eq:zagb}
	  \frac{x_1^3+x_2^3+x_3^3-3x_1x_2 x_3}{-x_1 x_2 x_3}
	\end{equation}
	and so of $(x_1^3+x_2^3+1-3x_1 x_2)/(-x_1 x_2)$ as well, and $\mathbf{F}$ is the constant term sequence of 
	\begin{equation}
		\frac{x_1^2 x_2+x_2^2 x_1+x_2^2 x_3 + x_3^2 x_2 + x_3^2  x_1 + x_1^2 x_3 -6x_1 x_2 x_3}{-x_1 x_2 x_3}
	\end{equation}
	and so of $(x_1^2 x_2+x_2^2 x_1+x_1^2 + x_2^2 + x_1+x_2-6x_1 x_2)/(-x_1 x_2)$ as well.
	\item Various rational functions whose diagonal is the g.f. for $(\gamma)$ are given in \cite{christol1984, christol1985, bostan2013,  straub2014}. Of particular interest is the one found by Straub, which involves 4 variables, while others involve between 5 to 8 variables. His rational function is \cite[Thm.~1.1]{straub2014}
\begin{equation}\label{eq:straub gamma}
((1-x_1-x_2)(1-x_3-x_4)-x_1x_2x_3x_4)^{-1}.
\end{equation}
Recently, Gheorghe Coserea added to the OEIS \cite{oeis} two rational functions, in 4 variables, whose diagonal is the g.f. of $(\gamma)$: $(1 - (x_1 x_2 x_3 x_4 + x_1 x_2 x_4 + x_3 x_4 + x_1 x_2 + x_1 x_3 + x_2 + x_3))^{-1}$ and $(1 - x_4(1+x_1)(1+x_2)(1+x_3)(x_1 x_2 x_3 + x_2 x_3 + x_2+x_3+1))^{-1}$.
\item Straub observed \cite[Ex.~3.4]{straub2014} that the diagonal of
\begin{equation}\label{eq:straub d}
((1-x_1-x_2)(1-x_3)-x_1x_2x_3)^{-1}
\end{equation}
is the g.f. of $\mathbf{D}$. Bostan, Boukraa, Maillard and Weil \cite{bostan2015}, as well as Coserea \cite{oeis}, have found very similar representations. 
\item Straub observed that
\begin{equation}\label{eq:straub franel}
((1-x_1)(1-x_2)(1-x_3)-x_1x_2x_3)^{-1}, \quad (1 - x_1 - x_2 - x_3 + 4x_1 x_2 x_3)^{-1}
\end{equation}
are both diagonals for $\mathbf{A}$ \cite[Ex.~3.6,~3.7]{straub2014}. In the OEIS, Coserea provides three more rational functions, in 3 variables as well, whose diagonal is the g.f. of $\mathbf{A}$: 
\begin{equation}\label{eq:coserea franel}
(1 - x_1 x_2 - x_2 x_3- x_1 x_3 - 2x_1 x_2 x_3)^{-1}, \quad
(1 + x_1 + x_2 + x_3 + 2(x_1 x_2 + x_2 x_3 + x_3 x_1) + 4x_1 x_2 x_3)^{-1}
\end{equation}
and $(1 - (x_1+ x_2 + x_3) + x_1 x_2 + x_2 x_3 + x_3 x_1  - 2x_1 x_2 x_3)^{-1}$.
\item Straub also observed \cite[Eq.~(32)]{straub2014} that  the g.f. for $(\delta)$ coincides with the diagonal of 
\begin{equation}\label{eq:straub delta}
(1 - (x_1 + x_2 + x_3 - x_4) -27 x_1x_2x_3x_4)^{-1}.
\end{equation}
\item The diagonal of $
((1-x_1)(1-x_2)(1-x_3)(1-x_4)-x_1 x_2 x_3 x_4)^{-1}$ is the g.f. for $s_{10}$, see Straub \cite[Ex.~3.6]{straub2014}. Coserea \cite{oeis} also provides the rational function $
(1 - (x_2 x_3 x_4 + x_1 x_3 x_4 + x_1 x_2 x_4  + x_1 x_2 x_3 + x_1 x_4 + x_2 x_3))^{-1}$.
\item Sequences $\mathbf{C}$ and $(\alpha)$ are the constant term sequences of the Laurent polynomial $P_d(x_1,\ldots,x_d) = \left( x_1+x_2+\ldots+x_d \right) \left(\frac{1}{x_1}+\ldots + \frac{1}{x_d}\right)$ with $d=3$ and $d=4$, respectively (see \cite[Ex.~6]{borwein2013}, \cite[Ex.~3.17]{borwein2016}, \cite[Eq.~(8)]{borwein2011}). By \eqref{eq:cts to diag}, this also gives diagonal representations for $\mathbf{C}$ and $(\alpha)$.
\item Coserea lists the functions $(1 - (x_1^2x_2 + x_2^2x_3 - x_3^2x_1 + 3x_1 x_2 x_3))^{-1}$, $(1 - (x_1^3 + x_2^3 - x_3^3 + 3x_1 x_2 x_3))^{-1}$ and $(1 + x_1^3 + x_2^3 + x_3^3 - 3x_1 x_2 x_3)^{-1}$ as rational functions whose diagonal is the g.f. for $\mathbf{B}$ \cite{oeis}.
\item Coserea has also found that the diagonal of $(1 - (x_4(x_1x_2+x_2x_3+x_3x_1)+ x_1 x_2 + x_1 x_3 + x_2 +x_3))^{-1}$ is the g.f. for $s_{7}$.
\item In \cite[Ex.~4.2]{straub2015}, Straub and Zudilin prove that the g.f. of $(\epsilon)$ is the diagonal of the Kauers–Zeilberger rational function $(1-(x_1+x_2+x_3+x_4)+2e_3(x_1,x_2,x_3,x_4)+4x_1x_2x_3x_4)^{-1}$, where $e_3$ is the third elementary symmetric polynomial.
\end{itemize}
In the proof of \cite[Thm.~1.1]{straub2014}, Straub observes that the rational function \eqref{eq:straub gamma} may be written as $\det(I_4 - M\cdot \mathrm{Diag}(x_1,\ldots,x_4))^{-1}$ where
\begin{equation}
M = \begin{pmatrix} 1  & 1 & 1 & 0 \\ 1 & 1 & 0 & 0 \\ 0 & 0 & 1 & 1 \\ 0 & 1 & 1 & 1  \end{pmatrix},
\end{equation}
and so by Lemma~\ref{cor:diagtocts} it gives rise to a constant term representation for $(\gamma)$, the one defined by \eqref{eq:straubdiag3}. Representations of the form $\det(I_d - M \cdot \mathrm{Diag}(x_1,\ldots,x_d))^{-1}$ are hidden in some of the other functions that we listed, as we now show. The rational function in \eqref{eq:straub d} can be written as $\det(I_3 - M \cdot \mathrm{Diag}(x_1,x_2,x_3))^{-1}$ where 
\begin{equation}
M= \begin{pmatrix} 1  & 1 & 0  \\ 1 & 1 & 1  \\ 1 & 0 & 1\end{pmatrix},
\end{equation}
and by Lemma~\ref{cor:diagtocts}, we find that the constant term series of \eqref{eq:meldiag2} is the g.f. of $\mathbf{D}$. The rational functions in \eqref{eq:straub franel} and \eqref{eq:coserea franel} can be written as $\det(I_3- M \cdot \mathrm{Diag}(x_1,x_2,x_3))^{-1}$ for
\begin{equation}
M=\begin{pmatrix} 1  & 1 & 0  \\ 0 & 1 & 1  \\ 1 & 0 & 1\end{pmatrix} ,\begin{pmatrix} 1  & 1 & 1  \\ 1 & 1 & -1  \\ 1 & -1 & 1\end{pmatrix},\begin{pmatrix} 0  & 1 & 1  \\ 1 & 0 & 1  \\ 1 & 1 & 0\end{pmatrix}, \begin{pmatrix} -1  & 1 & 1  \\ -1 & -1 & 1  \\ -1 & -1 & -1\end{pmatrix}.
\end{equation}
\subsection{New constant term representations}\label{sec:newconstant}
The discussion in \S\ref{sec:rational examples} shows that there are known constant term representations for $\mathbf{A}$, $\mathbf{B}$, $\mathbf{C}$, $\mathbf{F}$, $(\gamma)$ and $(\alpha)$, and also that a previously known diagonal representation for $\mathbf{D}$ gives rise to constant term representation through Lemma~\ref{cor:diagtocts}.

We call a Laurent polynomial \emph{good} if it may be factored into a product of Laurent polynomial, each of which has coefficients in the set $\{-1,0,1\}$.
We have conducted a computer search of the following form: we enumerated good Laurent polynomials $f$ (in several variables) with bounded degree and coefficients. For each such polynomial we tested whether $\CTS(f)$ is the g.f. of a sporadic sequence. For each of the 15 sequences, the search yielded more than one Laurent polynomial whose $\CTS$ is its g.f.. The reason for looking only at good Laurent polynomials is that they lead to binomial sum representations which do not involve powers of integers other than $\pm 1$ (as in Proposition~\ref{prop:formulas}).

We list some of the Laurent polynomials we have found in the following theorem.
\begin{proposition}\label{prop:rep}
The constant term series of the Laurent polynomials in the second column are g.f.s of the sequences in the first column:	
\begin{center}
	\begin{tabular}{l|p{110mm}}
		\hline
		Sequence & Laurent polynomial  \\[0.5ex]
		\hline
		$\mathbf{A}$ & $(xy)^{-1} (x+1)(y+1)(x+y)$ \newline $(xy)^{-1} (x + y + 1) (x y + y + 1)$\\
		$\mathbf{B}$ & $(-xy)^{-1} (x + y + 1) (x^2 +y^2- xy - x - y + 1)$  \newline 
		$(-xy)^{-1} (x^2 - x y + y^2 - x - y) (x^2 - x y + y^2 + x + y + 1) $ \\
		$\mathbf{C}$ & $(xy)^{-1} (x + y + 1) (xy + x + y)$  \newline $(-xxy)^{-1} (x + y - 1) (x + y + 1) (xy - 1)$\\
		 $\mathbf{D}$ & $(xy)^{-1}(x+1)(y+1)(x+y+1)$ \newline $(xy)^{-1} (y - 1) (x - 1) (x - y + 1) (-x + y + 1)$ \\
		$\mathbf{E}$ & $(-xy)^{-1}(xy + x + y - 1)  (xy - x - y - 1)$ \\
		$\mathbf{F}$ & $(xy)^{-2}(-x+y+1)(x-y+1)(x+y-1)(x+y+1)(x^2+y^2+1)$ \\
	\end{tabular}
\end{center}

\begin{center}
	\begin{tabular}{l|p{110mm}}
		\hline
Sequence & Laurent polynomial  \\[0.5ex]
		\hline	
		$(\delta)$ & $(xyz)^{-1}(y - z + 1) (-x + y + z)  (x + z + 1) (x + y - 1)$ \newline $(xyz)^{-1}(xy + yz+zx)(x^2 +y^2+z^2 - xy-yz-zx+ x + y + z + 1)$ \\
		$(\eta)$ & $(xyz)^{-1} (zx + xy - yz - x - 1) (xy + yz  -zx - y - 1) ( yz + zx -xy- z - 1)  $\\
		$(\alpha)$ & $(xyz)^{-1}(-x - y - z + 1) (x - y) (x - y + z + 1)(x + y - z + 1)$ \newline $(xyz)^{-1}(x + y + z + 1) (xyz + xy + yz + zx)$  \\
		$(\epsilon)$ & $(xyz)^{-1}(x+1)(y+1)(z+1)(x+y+z+1)$  \\
		 $(\zeta)$ & $(xyz)^{-1}(x + y + z) (x + y + z  + xy+yz+zx + xyz)$\\
		$(\gamma)$ & $(xyz)^{-1} (y + z) (x + 1) (x + y + 1)  (x + y + z)$\newline $(xyz)^{-1} (xy+yz+zx+x+z)(xy+yz+zx+x+y+z+1)$
\end{tabular}
\end{center}

\begin{center}
\begin{tabular}{l|p{110mm}}
\hline
Sequence & Laurent polynomial  \\[0.5ex]
\hline
		$s_7$ & $(xyz)^{-1}(x-1)(y + 1) (x + z)  (y-x-z+1)$  \\
		$s_{10}$ &  $(xyz)^{-1} (x + 1) (y + 1) (z + 1) (xyz + 1)$  \\
		$s_{18}$ &  $(-xyz)^{-1}(x^2 +y^2+z^2 - xy - yz-zx - x + y - z) (x^2 +y^2+z^2+ xy +yz -zx+ x + y + z)	$	
	\end{tabular}
\end{center}
\end{proposition}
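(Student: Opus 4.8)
The plan is to verify each row of the table by identifying the constant term series $\CTS(\Lambda)$ with the generating function $\sum_{n\ge 0} u_n z^n$ of the corresponding sequence. Writing $C(z) := \CTS(\Lambda) = \sum_{n \ge 0} \CT(\Lambda^n) z^n$, it suffices to show that $C$ satisfies the defining differential equation of the sequence — equation \eqref{eq:diff2} in the second-order cases and equation \eqref{eq:diff3} in the third-order cases — and then to match finitely many initial coefficients. The first ingredient is that $C$ is D-finite: each $\Lambda$ in the table becomes a genuine polynomial after multiplication by a suitable monomial, so a homogenization of \eqref{eq:cts to diag} expresses $C$ as the diagonal of a rational function in one further variable, which is D-finite by Theorem~\ref{thm:lip}.

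To produce the annihilating operator explicitly and compare it with \eqref{eq:diff2}/\eqref{eq:diff3}, I would first extract $\CT(\Lambda^n)$ as a finite multivariate binomial sum. Here the \emph{good} (that is, $\{-1,0,1\}$-coefficient) factorization is what makes the bookkeeping clean: expanding each factor of $\Lambda^n$ by the multinomial theorem and imposing the condition that every variable cancel against the denominator monomial produces precisely a sum of products of multinomial coefficients over the lattice points of an explicit polytope, with signs coming from the negative coefficients. This is exactly the shape of the sets $S(n)$, $T(n)$, $U(n)$ appearing in Proposition~\ref{prop:formulas}. I would then apply multivariate creative telescoping (Zeilberger's algorithm and its multivariate extensions, the underlying mechanism being integration by parts on the torus) to this summand to obtain a linear recurrence in $n$ for $\CT(\Lambda^n)$, and check that it agrees with the prescribed recurrence \eqref{eq:zag eq}, \eqref{eq:almkvist} or \eqref{eq:cooper}; equivalently, that $C$ is annihilated by the operator in \eqref{eq:diff2} or \eqref{eq:diff3}.

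It then remains to pin down the correct solution. The point $z=0$ is a regular singular point of the relevant operator at which all indicial roots equal $0$, so up to scaling there is a unique solution holomorphic at the origin; since $\CT(\Lambda^0)=1$ matches $u_0=1$, and $C$ and $\sum_{n\ge 0} u_n z^n$ satisfy the same operator, they coincide. To remove any doubt about the order of the matched recurrence, I would also compute $\CT(\Lambda^n)$ directly for $n$ up to the order $k \in \{2,3\}$ and confirm agreement with the first $k$ values of the sequence.

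The main obstacle is the recurrence-matching step carried out uniformly over all fifteen polynomials. For most sequences the extracted sum is one- or two-dimensional and the telescoping is routine, so the verification is essentially mechanical. The genuinely hard case is $(\eta)$: its good factorization yields the most intricate multivariate sum in the table, for which establishing the order-three recurrence by creative telescoping is computationally demanding — this is the step requiring the computational assistance acknowledged in the paper. Tellingly, $(\eta)$ is also the one sequence for which the Newton-polytope interior-point property of Theorem~\ref{thm:newton} is not claimed, so one should expect it to behave as the exceptional, most delicate case throughout.
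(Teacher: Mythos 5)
Your proposal is correct and follows essentially the same route as the paper: verify the first few coefficients, then use multivariate creative telescoping (Chyzak/Koutschan-style, with Lipshitz's theorem guaranteeing D-finiteness in the background) to compute the operator annihilating $\CTS(\Lambda)$ and match it against \eqref{eq:diff2} or \eqref{eq:diff3}, with $(\eta)$ singled out as the computationally delicate case — exactly as in the paper, which runs \textit{Mgfun} on the diagonal representation (after some pre-processing for $(\delta)$, $(\alpha)$, $s_{18}$) and \textit{HolonomicFunctions} for $(\eta)$. The only cosmetic difference is that you telescope the extracted multinomial summand while the paper telescopes the rational-function integrand, and you invoke the indicial-root argument where the paper simply matches initial terms of the recurrence.
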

As can be seen, we were always able to find good \emph{symmetric} polynomials, except in the cases of $(\gamma)$, $s_7$ and $s_{18}$. For $(\gamma)$, $P = (xyz)^{-1}(x + y + z + 1) (x^2y + xy^2 + y^2 z+y z^2 + z^2 x + zx^2  + xy + yz + zx + 2xyz)$ is a symmetric polynomial whose constant term series is (empirically) the g.f. of $(\gamma)$, but $P$ is not good according to our definition. For $s_{18}$, $P = (xyz)^{-1}(xy + yz +zx + x + y + z) (x^2 +y^2 + z^2 - xy -yz-zx-x-y-z+1)$ is a symmetric polynomial whose constant term series is (empirically) the g.f. of $s_{18}$, and is good according to our definition, but we do not know how to prove it. Note that the first polynomial for $\mathbf{B}$ was given in \cite{verrill1999}, although it was not factored; its factorization is useful in the proofs of Proposition~\ref{prop:formulas} and Theorem~\ref{thm:super}.
\begin{proof}[Proof of Proposition~\ref{prop:rep}]
For each of the Laurent polynomials in the proposition, we may easily compute the first three terms and see that they agree with the first three terms of the respective sequence. It now suffices to compute the differential operator annihilating the constant term series of the polynomial, and that it coincides with \eqref{eq:diff2} or \eqref{eq:diff3}.
 
The proof of Theorem~\ref{thm:lip} is constructive and may give rise to such a differential operator (by appealing to \eqref{eq:cts to diag}). However, the algorithm arising from Lipshitz's theorem is often not efficient enough for practical purposes. However, Zeilberger's Creative Telescoping algorithm \cite{zeilberger1991}, as generalized by Chyzak \cite{chyzak2000}, gives a much faster way to compute the operator. See \cite{bostan2011} for more details.

The algorithm is implemented in the package $\textit{Mgfun}$.\footnote{Version 4.1, available from \url{https://specfun.inria.fr/chyzak/mgfun.html}.} To find an operator annihilating the diagonal of $f(s,t,x)$, we run the command
\begin{verbatim}
P_, S_, T_ := op(op(Mgfun:-creative_telescoping(F, x::diff, [s::diff, t::diff])))
\end{verbatim}
on $F:=(st)^{-1}f(s,t/s,x/t)$, and $P\_$ will be the operator. So if we want to find the operator annihilating $\CTS(f(s,t))$, we shall run it on $F:=(st)^{-1}(1-xf(s,t/s))^{-1}$. To find an operator annihilating the diagonal of $f(s,t,x,w)$, we run the command
\begin{verbatim}
P_, S_, T_, X_ := op(op(Mgfun:-creative_telescoping(F, w::diff, [s::diff, t::diff, x::diff])))
\end{verbatim}
on $F:=(stx)^{-1}f(s,t/s,x/t,w/x)$, and $P\_$ will be the operator. So if we want to find the operator annihilating $\CTS(f(s,t,x))$, we shall run it on $F:=(stx)^{-1}(1-wf(s,t/s,x/t))^{-1}$.

In the two-variate case, the algorithm returns the expected operator in under a minute on an average laptop. For the three-variate case, the algorithm takes a few minutes when we ran it on the polynomials for $(\epsilon)$, $(\zeta)$, $(\gamma)$, $s_7$ and $s_{10}$. For $(\delta)$, $(\eta)$, $(\alpha)$ and $s_{18}$ we had to `massage' the Laurent polynomials slightly before running the program, as we now explain.

For $(\delta)$, we consider $f=(xyz)^{-1}(y - z + 1) (-x + y + z)  (x + z + 1) (x + y - 1)$. We know by Lemma~\ref{cor:diagtocts} that for the matrix 
\[
M = \begin{pmatrix} 0 & 1 & -1 & 1 \\ -1 & 1 & 1 & 0 \\ 1 & 0 & 1 & 1 \\ 1 & 1 & 0 &-1  \end{pmatrix},
\]
we have
\begin{equation}
\CTS\left( f \right) = \Delta_{4}\left(\frac{1}{\det(I_4 - M\cdot \mathrm{Diag}(x_1,\ldots,x_4))}\right).
\end{equation}
Instead of running the algorithm on $(stx)^{-1}(1-w f(s,t/s,x/t))^{-1}$, we ran it on $(stx)^{-1} g(s,t/s,x/t,w/x)$ where $g:=\det(I_4 - M\cdot \mathrm{Diag}(x_1,\ldots,x_4))$, and it proved to be much quicker. The same trick works for the first polynomial of $(\alpha)$. The second polynomial for $\alpha$ comes from the polynomial $\left( x_1+x_2+x_3+x_4 \right)\left(1/x_1 + \ldots + 1/x_4\right)$ mentioned in \S\ref{sec:rational examples}, in which we plugged $x_4=1$.

For the second polynomial $f$ for $(\delta)$, note that $\CT(f^n) = \CT(g^n)$ for $g=f(\frac{1}{xyz},\frac{1}{yz},\frac{1}{z})$. We ran the algorithm on $(stx)^{-1}(1-w g(s,t/s,x/t))^{-1}$. For $s_{18}$, the same trick works.

Finally, for $(\eta)$ we use a different implementation of Creative Telescoping, which turned out to be faster for this example, namely the \textit{Mathematica} package \textit{HolonomicFunctions} \footnote{Version 1.7.3, available from \url{https://www3.risc.jku.at/research/combinat/software/ergosum/installation.html}.} by Christoph Koutschan. To find the recurrence satisfied by $\CT(f^n)$ for the polynomial of $(\eta)$, we ran
\begin{verbatim}
CreativeTelescoping[((z x + x y - y z - x - 1) (x y + y z - z x - y - 1) 
  (y z + z x - x y - z - 1)/(x y z))^n/(x y z), Der[x], {Der[y], Der[z], S[n]}]
CreativeTelescoping[First[%], Der[y], {Der[z], S[n]}]
CreativeTelescoping[First[%], Der[z], {S[n]}]
\end{verbatim}
which outputs the expected recurrence.
\end{proof}

\subsection{Proof of Proposition~\ref{prop:formulas}}
 We use bold Latin letters $\mathbf{a},\mathbf{b},\ldots$ as short for sequences $(a_1,a_2,a_3),(b_1,b_2,b_3),\ldots$. If e.g. $\mathbf{a}$ sums to $n$  we write $\binom{n}{\mathbf{a}}$ for the multinomial coefficient $\binom{n}{a_1,a_2,a_3}$. For the first part, note that
	\begin{equation}
	\begin{split}
	\left(\frac{(x+y+1)(x^2+y^2-xy-x-y+1)}{-xy}\right)^n &= \frac{(x+y+1)^n(x+y\omega + \omega^2)^n(x+y\omega^2 + \omega)^n}{(-xy)^n} \\
	&= (-xy)^{-n}\sum_{a_1+a_2+a_3 = n} x^{a_1} y^{a_2} \binom{n}{\mathbf{a}} \sum_{b_1+b_2+b_3 = n} x^{b_1} y^{b_2}  \omega^{b_2+2b_3} \binom{n}{\mathbf{b}} \\
	&\qquad   \sum_{c_1+c_2+c_3 = n} x^{c_1} y^{c_2} \omega^{2c_2+c_3} \binom{n}{\mathbf{c}},
	\end{split}
	\end{equation}
	where $\omega=e^{2\pi i /3}$. By Proposition~\ref{prop:rep}, $\CTS((x+y+1)(x^2+y^2-xy-x-y+1)/(-xy))$ is the generating series for $B_n$, and so
	\begin{equation}\label{eq:bunity}
	B_n = (-1)^n\sum_{\substack{a_1,a_2,a_3\\b_1,b_2,b_3\\c_1,c_2,c_3}} \binom{n}{\mathbf{a}}\binom{n}{\mathbf{b}}\binom{n}{\mathbf{c}} \omega^{b_2+2b_3+2c_2+c_3},
	\end{equation}
	where the sum is over tuples $(a_1,a_2,a_3,b_1,b_2,b_3,c_1,c_2,c_3)$ of non-negative integers satisfying $a_1+a_2+a_3 = b_1+b_2+b_3 = c_1+c_2+c_3 = n$ and  $a_i + b_i+c_i  =n$ for $i=1,2,3$. Observe that $\Re(2\omega^k) = 3\cdot 1_{3 \mid k} - 1$. Since $B_n$ is real, we may take the real part of \eqref{eq:bunity}, obtaining
	\begin{equation}\label{eq:btwosums}
	2(-1)^n B_n = 3\sum_{\substack{a_1,a_2,a_3\\b_1,b_2,b_3\\c_1,c_3,c_3}} \binom{n}{\mathbf{a}}\binom{n}{\mathbf{b}} \binom{n}{\mathbf{c}} -\CT\left( \left(\frac{(x+y+1)^3}{xy}\right)^n\right),
	\end{equation}
	where we sum over tuples $(a_1,a_2,a_3,b_1,b_2,b_3,c_1,c_2,c_3)$ in $S(n)$. The second term evaluates to $\binom{3n}{n,n,n}$ by the multinomial theorem, as needed.	For the second part, note that
	\begin{equation}
	\begin{split}
	&\left( \frac{ (-x + y + 1)  (x - y + 1)  (x + y - 1)  (x + y + 1)  (x^2 + y^2 +1)}{x^2y^2} \right)^n \\
	& \qquad \qquad  = \frac{(-x+y+1)^n (x-y+1)^n (x+y-1)^n ( x+y+1)^n(x^2+y^2+1)^n}{(xy)^{2n}}\\
	&\qquad \qquad = (xy)^{-2n} \sum_{a_1+a_2+a_3 = n} x^{a_1} y^{a_2}  (-1)^{a_1} \binom{n}{\mathbf{a}} \sum_{b_1+b_2+b_3=n} x^{b_1} y^{b_2} (-1)^{b_2}\binom{n}{\mathbf{b}}\\
	& \qquad \qquad \qquad \qquad \qquad\qquad \sum_{c_1+c_2+c_3 = n} x^{c_1} y^{c_2}  (-1)^{c_3} \binom{n}{\mathbf{c}} \sum_{d_1+d_2+d_3=n} x^{d_1} y^{d_2} \binom{n}{\mathbf{d}}\\
	& \qquad \qquad \qquad \qquad \qquad\qquad \sum_{e_1+e_2+e_3 = n} x^{2e_1} y^{2e_2} \binom{n}{\mathbf{e}}.
	\end{split}
	\end{equation}
	Since $\CTS( (-x + y + 1)  (x - y + 1)  (x + y - 1)  (x + y + 1)  (x^2 + y^2 + 1)/(x^2y^2))$ is the generating series for $F_n$ (by Proposition~\ref{prop:rep}), one obtains \eqref{eq:fn form}.	For the third part, note that
	\begin{equation}
	\begin{split}
	&\left(  \frac{(y - z + 1)  (-x + y + z)  (x + z + 1)  (x + y - 1)}{xyz} \right)^n \\
	\qquad  \qquad \qquad &= \frac{(y-z+1)^n (-x+y+z)^n ( x+z+1)^n (x+y-1)^n}{(xyz)^n} \\
	\qquad \qquad \qquad  &= (xyz)^{-n} \sum_{a_1+a_2+a_3=n} y^{a_1} z^{a_2} (-1)^{a_2} \binom{n}{\mathbf{a}} \sum_{b_1+b_2+b_3=n} x^{b_1} y^{b_2} z^{b_3} (-1)^{b_1} \binom{n}{\mathbf{b}} \\
	&  \qquad \qquad \qquad \qquad\qquad \sum_{c_1+c_2+c_3=n} x^{c_1} z^{c_2} \binom{n}{\mathbf{c}}  \sum_{d_1+d_2+d_3=n} x^{d_1} y^{d_2} (-1)^{d_3} \binom{n}{\mathbf{d}} .
	\end{split}
	\end{equation}
	Since $\CTS( (y-z+1)  (-x+y + z)  (x + z + 1)  (x + y - 1)/(xyz))$ is the generating series for $\AZ_n$ (by Proposition~\ref{prop:rep}), one obtains \eqref{eq:azn form}. \qed

\subsection{Proof of Theorem~\ref{thm:newton}} For $\mathbf{A}\mhyphen\mathbf{E}$, we can check that the polynomials in Proposition~\ref{prop:rep} satisfy all the conditions of the theorem by drawing the corresponding Newton polygons (in case there is more than one polynomial, we check the first one only). For instance, 

\begin{center}
\begin{tikzpicture}[scale=1.5]
	\draw [->,thick] (-1.5,0)--(1.5,0) node[right]{$x$};
	\draw [->] (0,-1.5)--(0,1.5) node[above]{$y$};
	\draw (1,0) -- (0,1) ;
	\draw (0,1) -- (-1,1) ;
	\draw (-1,1) -- (-1,0) ;
	\draw (-1,0) -- (0,-1) ;
	\draw (0,-1) -- (1,-1) ;
	\draw (1,-1) -- (1,0) ;
	\fill (1,0) circle (1.5pt);
	\fill (0,1) circle (1.5pt);
	\fill (-1,1) circle (1.5pt);
	\fill (-1,0) circle (1.5pt);
	\fill (0,-1) circle (1.5pt);
	\fill (1,-1) circle (1.5pt);
	\fill (0,0) circle (2pt);
	\end{tikzpicture}
\end{center}

is the Newton polygon of the polynomial corresponding to $\mathbf{A}$ in Proposition~\ref{prop:rep}. For $\mathbf{F}$, the polynomial $P(x,y) = (xy)^{-2}(-x+y+1)(x-y+1)(x+y-1)(x+y+1)(x^2+y^2+1)$ satisfies the first condition of the theorem, but not the later two. It turns out that $P(x,y) = Q(x^2,y^2)$ for $Q(x,y) = (-xy)^{-1}(x+y+1)(x^2+y^2-2xy-2x-2y+1)$. Since $\CT(P^n) = \CT(Q^n)$, we see that $Q(x,y)$ satisfies the second condition. Finally, we check the third condition, regarding the Newton polygon of $Q$, which is given below:

\begin{center}
	\begin{tikzpicture}[scale=1.5]
	\draw [->,thick] (-1.5,0)--(2.5,0) node[right]{$x$};
	\draw [->] (0,-1.5)--(0,2.5) node[above]{$y$};
	\draw (2,-1) -- (-1,2) ;
	\draw (-1,2) -- (-1,-1) ;
	\draw (-1,-1) -- (2,-1) ;
	\fill (2,-1) circle (1.5pt);
	\fill (-1,2) circle (1.5pt);
	\fill (1,0) circle (1.5pt);
	\fill (0,1) circle (1.5pt);
	\fill (1,-1) circle (1.5pt);
	\fill (-1,1) circle (1.5pt);
	\fill (0,-1) circle (1.5pt);
	\fill (-1,0) circle (1.5pt);
	\fill (-1,-1) circle (1.5pt);
	\fill (0,0) circle (2pt);
	\end{tikzpicture}
\end{center}

Indeed, $(0,0)$ is its only interior point. For $(\delta)\mhyphen(\gamma)$ and $s_7\mhyphen s_{18}$, drawing a picture is harder. In $\textit{SageMath}$, one can check whether a Laurent polynomial in 3 variables contains the origin as its only interior point using the following procedure:
\begin{verbatim}
def checkInterior(poly):
  polyhedron = Polyhedron(vertices = poly.exponents())
  for pt in polyhedron.integral_points():
    if tuple(pt) != (0,0,0) and polyhedron.interior_contains(pt):
     return False
  return polyhedron.interior_contains((0,0,0))
\end{verbatim}
For instance, the following code checks that the first polynomial of $(\delta)$ (from Proposition~\ref{prop:rep}) satisfies the conditions of the theorem:
\begin{verbatim}
R.<x,y,z> = LaurentPolynomialRing(QQ,3)
P = (y-z+1)*(-x+y+z)*(x+z+1)*(x+y-1) / (x*y*z)
checkInterior(P)
>> True
\end{verbatim}
This allowed us to verify that the Newton polytopes contain the origin as their only interior point, except in the case of $(\eta)$, where $(1,0,0)$ and $(1,1,0)$ (as well as their permutations) are additional interior points. \qed

\section{Proof of Theorem~\ref{thm:super}}
\subsection{Auxiliary results}
The following lemma is a classical result of Jacobsthal \cite{brun1949}, discovered independently by Kazandzidis \cite{kazandzidis1969} (cf. \cite[Ch.~7.1.6]{robert2000}).
\begin{lem}\label{lem:jacbosthal}
Let $p \ge 3$ be a prime and $n \ge m \ge 0$ be integers. Then
\begin{equation}
\frac{\binom{pn}{pm}}{\binom{n}{m}} \equiv 1 \bmod p^{v_p(nm(n-m))+3-\delta_{p,3}}
\end{equation}
where $\delta_{p,3}=1$ if $p=3$ and $\delta_{p,3}=0$ otherwise.
\end{lem}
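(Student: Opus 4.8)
The plan is to turn the ratio into a product of $p$-adic units and study its $p$-adic logarithm. Put $k=n-m$. Expanding $\binom{pn}{pm}=\prod_{j=1}^{pm}\frac{pk+j}{j}$ and splitting off the indices $j$ divisible by $p$ (for $j=pi$ the factor is $\frac{k+i}{i}$, and $\prod_{i=1}^{m}\frac{k+i}{i}=\binom{n}{m}$), one obtains the identity
\[
\frac{\binom{pn}{pm}}{\binom{n}{m}}=\prod_{\substack{1\le j\le pm\\ p\nmid j}}\Bigl(1+\frac{pk}{j}\Bigr),
\]
which holds in $\ZZ_p$ because every $j$ occurring is a unit. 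Equivalently, writing $(pN)!_p:=\prod_{1\le j\le pN,\,p\nmid j}j$, the right-hand side is the symmetric quotient $(pn)!_p/((pm)!_p\,(pk)!_p)$, i.e.\ a ratio of values of the $p$-adic Gamma function $\Gamma_p$ at the three points $pn,pm,pk\in p\ZZ_p$, which satisfy $pn=pm+pk$. This is the form I would work with, as it displays the problem as a second difference of a smooth function at points summing correctly.

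Since the ``$-1$ part'' $pk/j$ of each factor has valuation at least $1+v_p(k)\ge 1>1/(p-1)$ for $p\ge 3$, the logarithm converges and $v_p(\mathrm{product}-1)=v_p(L)$ with $L=\log(\mathrm{product})$. Expanding $\log(1+pk/j)$ and collecting by powers of $p$ gives
\[
L=\sum_{r\ge 1}\frac{(-1)^{r-1}p^r}{r}\,H_r\,Q_r,\qquad H_r=\sum_{s=1}^{p-1}\frac{1}{s^r},\quad Q_r=P_r(n)-P_r(m)-P_r(k),
\]
where $P_r(N)=\sum_{t=0}^{N-1}t^r$; the constant terms cancel precisely because $n-m-k=0$. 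As $P_r(0)=P_r(1)=0$, each $Q_r$ vanishes when $m=0$ or $k=0$, so $mk\mid Q_r$; a short computation gives $Q_1=mk$ and $Q_2=mk(n-1)$. Thus the claim $v_p(L)\ge v_p(nmk)+3-\delta_{p,3}$ reduces to showing that the series $\sum_{r}\frac{(-1)^{r-1}p^r}{r}H_r\,(Q_r/mk)$ has valuation at least $v_p(n)+3-\delta_{p,3}$.

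The heart of the matter, and the step I expect to be the main obstacle, is that no individual term attains this bound: the terms of consecutive $r$ have equal valuation and must be combined. The engine is the Wolstenholme--Glaisher congruences $H_1\equiv-\frac{p^2}{3}B_{p-3}$ and $H_2\equiv\frac{2p}{3}B_{p-3}\pmod{p^3}$, valid for $p\ge 5$; their common $B_{p-3}$-coefficient makes the combined $r=1,2$ contribution collapse to $-\frac{p^3 n}{3}B_{p-3}$ modulo higher powers, producing exactly the missing factor $n$ together with the power $p^3$. The analogous cancellations for $r\ge 3$, organised through the recursions $2H_r=-rp\,H_{r+1}-\binom{r+1}{2}p^2H_{r+2}-\cdots$ (for odd $r$) and the polynomial relations among the $Q_r$, are the delicate part; conceptually they are cleanest to package as the (Lipschitz/Taylor) smoothness of $\log\Gamma_p$ on $p\ZZ_p$, where the second-difference structure coming from $pn=pm+pk$ renders the factor $nmk$ transparent. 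Finally, $p=3$ must be handled separately: there $p-1\mid 2$ and the Bernoulli number $B_{p-3}=B_0$ degenerates, so Wolstenholme weakens to $v_3(H_1)=1$ rather than $\ge 2$; this loses exactly one power of $3$ and accounts for the term $-\delta_{p,3}$ in the exponent.
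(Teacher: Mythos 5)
The paper does not actually prove this lemma: it quotes it as a classical result of Jacobsthal and Kazandzidis and points to Robert's book (Ch.~7.1.6) for a proof, so your attempt must be judged against that classical argument, whose route (logarithm of a product of $p$-adic units, equivalently $\log\Gamma_p$) you are essentially following. Your opening reductions are correct: the identity $\binom{pn}{pm}/\binom{n}{m}=\prod_{p\nmid j,\ j\le pm}(1+pk/j)$, the equality $v_p(\text{product}-1)=v_p(L)$ for $p\ge 3$, the expansion $L=\sum_{r\ge1}\frac{(-1)^{r-1}p^r}{r}H_rQ_r$, and the values $Q_1=mk$, $Q_2=mk(n-1)$ all check out. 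The gap is that everything from ``the heart of the matter'' onward is asserted rather than proved, and, more seriously, the mechanism you propose cannot reach the stated modulus. The Glaisher congruences have \emph{fixed} $p$-adic precision (and your statement for $H_2$ overstates it: $H_2\equiv\frac{2p}{3}B_{p-3}$ holds modulo $p^2$, not $p^3$; e.g.\ for $p=7$ one computes $H_2\equiv 308\not\equiv 259\equiv\frac{2p}{3}B_{p-3} \bmod 343$). Combining the $r=1,2$ terms through such congruences yields $-\frac{p^3}{3}B_{p-3}nmk$ plus an error of size $O(p^4mk)$, and $4+v_p(mk)<3+v_p(nmk)$ as soon as $v_p(n)\ge 2$. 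No finite-precision estimate of the $H_r$ can produce a modulus that grows with $v_p(n)$: the factor $n$ must appear as a \emph{polynomial} factor in every term, not as a congruence modulo a fixed power.

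The missing idea, which ``Lipschitz/Taylor smoothness of $\log\Gamma_p$'' cannot supply, is that $G(t)=\log\Gamma_p(1+pt)$ is an \emph{odd} power series; this follows from the reflection formula $\Gamma_p(x)\Gamma_p(1-x)=\pm1$, which you never invoke. Writing $L=G(n)-G(m)-G(k)$ and expanding, the second-difference structure kills the degree-$1$ term, oddness kills all even degrees, so $L=\sum_{j\ge3,\ j\ \mathrm{odd}}c_j\,(n^j-m^j-k^j)$; for odd $j$ the polynomial $n^j-m^j-k^j$ vanishes at $m=0$, at $k=0$, \emph{and} at $m=-k$, hence is divisible by $mkn$ in $\ZZ[m,k]$, after which Wolstenholme ($v_p(H_2)\ge 1$ for $p\ge5$, failing for $p=3$, whence the $-\delta_{p,3}$) gives $v_p(c_3)\ge3$ and crude bounds handle odd $j\ge5$. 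Smoothness alone gives only the factor $mk$: the smooth function $G(t)=t^2$ has second difference $2mk$, not divisible by $n$. Moreover, the need for the deferred cancellations is not hypothetical: for $p=5$ and $(n,m,k)=(25,5,20)$ the target valuation is $3+v_5(nmk)=7$, while the single term $\frac{p^3}{3}H_3Q_3$ has valuation exactly $3+v_5(H_3)+v_5(Q_3)=3+1+2=6$, so the $r\ge3$ pairings you postpone are the actual content of the lemma, not a routine verification. Two further loose ends: $p$-integrality of $Q_r/(mk)$ does not follow from divisibility in $\QQ[m,k]$ (Faulhaber coefficients acquire $p$ in their denominators once $r\ge p-1$), and the case $p=3$ is likewise only gestured at.
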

The identity $\binom{n}{m} = \binom{n-1}{m-1}n/m$ yields the following congruence.
\begin{lem}\label{lem:lower}
Let $p$ be a prime and $n \ge m \ge 0$ be integers. Suppose that $v_p(n) \ge v_p(m)$. Then
\begin{equation}
\binom{n}{m} \equiv 0 \bmod p^{v_p(n)-v_p(m)}.
\end{equation}
\end{lem}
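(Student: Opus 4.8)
The plan is to extract a valuation inequality directly from the factorization identity quoted just before the statement. First I would clear the denominator in $\binom{n}{m} = \binom{n-1}{m-1}\,n/m$ to obtain the integer identity
\begin{equation}
m\binom{n}{m} = n\binom{n-1}{m-1},
\end{equation}
which holds for all integers $n \ge m \ge 1$ (the case $m \ge 1$ is the only one where $v_p(m)$ is finite, so I would treat $m=0$, forcing $n=0$, as a degenerate situation outside the scope of the claim). This identity is the entire engine of the proof; everything else is bookkeeping with $p$-adic valuations.

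Next I would apply $v_p$ to both sides of this integer equation. Since $v_p$ is additive on products, this yields
\begin{equation}
v_p(m) + v_p\!\left(\binom{n}{m}\right) = v_p(n) + v_p\!\left(\binom{n-1}{m-1}\right).
\end{equation}
Rearranging gives $v_p\!\big(\binom{n}{m}\big) = v_p(n) - v_p(m) + v_p\!\big(\binom{n-1}{m-1}\big)$. The key observation is that $\binom{n-1}{m-1}$ is a non-negative integer, so $v_p\!\big(\binom{n-1}{m-1}\big) \ge 0$, and therefore $v_p\!\big(\binom{n}{m}\big) \ge v_p(n) - v_p(m)$. This is exactly the assertion that $\binom{n}{m} \equiv 0 \bmod p^{v_p(n)-v_p(m)}$, and it is in fact established without even invoking the hypothesis $v_p(n) \ge v_p(m)$.

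The role of the hypothesis $v_p(n) \ge v_p(m)$ is merely to guarantee that the exponent $v_p(n)-v_p(m)$ is non-negative, so that the congruence modulus $p^{v_p(n)-v_p(m)}$ is a genuine integer power of $p$ rather than a formal fraction; I would remark that when $v_p(n) \le v_p(m)$ the stated congruence is vacuous, since any integer is divisible by $p^0 = 1$. There is no real obstacle here: the only points requiring a sentence of care are the boundary cases $m=0$ and $n=m$ (where $\binom{n}{m}=1$ and the exponent is $0$, so the congruence reads $1 \equiv 0 \bmod 1$ and holds trivially). The proof is thus a two-line manipulation of valuations, and I would present it essentially as above.
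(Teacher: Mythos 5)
Your proof is correct and is exactly the paper's argument: the paper derives the lemma in one line from the identity $\binom{n}{m} = \binom{n-1}{m-1}\,n/m$, and your write-up simply fills in the same valuation bookkeeping, i.e.\ $v_p(m) + v_p\bigl(\binom{n}{m}\bigr) = v_p(n) + v_p\bigl(\binom{n-1}{m-1}\bigr)$ together with $v_p\bigl(\binom{n-1}{m-1}\bigr) \ge 0$. Your handling of the degenerate cases and the remark that the hypothesis $v_p(n) \ge v_p(m)$ only ensures the modulus is a genuine integer are both sound.
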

\subsection{Conclusion of proof}
As $\binom{3n}{n,n,n} = \binom{3n}{2n}\binom{2n}{n}$ satisfies the Gauss congruences of order $2$ by Lemma~\ref{lem:jacbosthal}, and so does $(-1)^n$, we deal with $\tilde{B}_n = 2(-1)^n B_n + \binom{3n}{n,n,n}$ instead of $B_n$.

The vector $(\mathbf{a},\mathbf{b},\mathbf{c})$ stands for $(a_1,a_2,\ldots,c_2,c_3) \in \ZZ^9$. Let $k,n \ge 1$ and let $p \ge 3$ be a prime. We write $p\cdot (\mathbf{a},\mathbf{b},\mathbf{c})$ for $(p\cdot \mathbf{a},p\cdot \mathbf{b},p\cdot \mathbf{c})$, and $p \mid (\mathbf{a},\mathbf{b},\mathbf{c})$ if all the entries of $(\mathbf{a},\mathbf{b},\mathbf{c})$  are divisible by $p$. Given $(\mathbf{a},\mathbf{b},\mathbf{c}) \in S(m)$ we write \begin{equation}
B(\mathbf{a},\mathbf{b},\mathbf{c}) = \binom{m}{\mathbf{a}}\binom{m}{\mathbf{b}} \binom{m}{\mathbf{c}}.
\end{equation}
Let $p$ be an odd prime and let $n, k \ge 1$. By Proposition~\ref{prop:formulas} we may write $\tilde{B}_{np^k}-\tilde{B}_{np^{k-1}}$ as
\begin{equation}\label{eq:diff s123}
\tilde{B}_{np^k}-\tilde{B}_{np^{k-1}} = 3(S_1 + S_2 + S_3)
\end{equation}
where
\begin{equation}
\begin{split}
S_1 &= \sum_{\substack{(\mathbf{a},\mathbf{b},\mathbf{c}) \in S(np^k)\\ p\nmid (\mathbf{a},\mathbf{b},\mathbf{c})}} B(\mathbf{a},\mathbf{b},\mathbf{c}),\\
S_2 &= \sum_{ (\mathbf{a},\mathbf{b},\mathbf{c}) \in S(np^{k-1}) } \left( B(p\cdot  (\mathbf{a},\mathbf{b},\mathbf{c})) - B( \mathbf{a},\mathbf{b},\mathbf{c}) \right), \\
S_3 &= \sum_{\substack{ p\cdot (\mathbf{a},\mathbf{b},\mathbf{c}) \in S(np^{k}) \\ (\mathbf{a},\mathbf{b},\mathbf{c}) \notin S(np^{k-1})  }} B(p\cdot  (\mathbf{a},\mathbf{b},\mathbf{c})) 
\end{split}
\end{equation}
Here we have used the following observation: if $(\mathbf{a},\mathbf{b},\mathbf{c}) \in S(np^{k-1})$ then $p\cdot (\mathbf{a},\mathbf{b},\mathbf{c}) \in S(np^{k})$.

Let $(\mathbf{a},\mathbf{b},\mathbf{c}) \in S(np^k)$ with $p\nmid (\mathbf{a},\mathbf{b},\mathbf{c})$. Suppose without loss of generalization that $p \nmid a_1$. At least one of $b_1$ and $c_1$ is indivisible by $p$ as well, because of the condition $a_1+b_1+c_1=np^k$. Suppose without loss of generalization that $p \nmid b_1$. The integer $B(\mathbf{a},\mathbf{b},\mathbf{c})$ is divisible by $\binom{np^k}{a_1}\binom{np^k}{b_1}$, which by Lemma~\ref{lem:lower} implies that $B(\mathbf{a},\mathbf{b},\mathbf{c}) \equiv 0 \bmod p^{2k}$, and so $S_1 \equiv 0 \bmod p^{2k}$.

We now deal with $S_2$. Let $(\mathbf{a},\mathbf{b},\mathbf{c}) \in S(np^{k-1})$. We may write both $B(p\cdot  (\mathbf{a},\mathbf{b},\mathbf{c}))$  and $B(\mathbf{a},\mathbf{b},\mathbf{c})$ as a product of $6$ binomial coefficients, namely
\begin{equation}
\begin{split}
B(p\cdot (\mathbf{a},\mathbf{b},\mathbf{c})) & = \binom{np^k}{pa_1} \binom{p(a_2+a_3)}{pa_2} \binom{np^k}{pb_1} \binom{p(b_2+b_3)}{pb_2} \binom{np^k}{pc_1} \binom{p(c_2+c_3)}{pc_2},\\
B (\mathbf{a},\mathbf{b},\mathbf{c}) & = \binom{np^{k-1}}{a_1} \binom{a_2+a_3}{a_2} \binom{np^{k-1}}{b_1} \binom{b_2+b_3}{b_2} \binom{np^{k-1}}{c_1} \binom{c_2+c_3}{c_2}.
\end{split}
\end{equation}
Applying Lemma~\ref{lem:jacbosthal} $6$ times, we obtain that
\begin{equation}\label{eq:b d div}
\frac{B(p\cdot  (\mathbf{a},\mathbf{b},\mathbf{c}))}{B(\mathbf{a},\mathbf{b},\mathbf{c})} \equiv 1 \bmod p^{d}
\end{equation}
where
\begin{equation}
\begin{split}
d = \min\{ &k+v_p(pa_1)+v_p(p(a_2+a_3)), v_p(p^3a_2a_3(a_2+a_3)),\\  &k+v_p(pb_1)+v_p(p(b_2+b_3)), v_p(p^3b_2b_3(b_2+b_3)),\\ &k+v_p(pc_1)+v_p(p(c_2+c_3)), v_p(p^3c_2c_3(c_2+c_3))\} - 1.
\end{split}
\end{equation}
Without loss of generalization, suppose that $d = k+v_p(a_1)+v_p(a_2+a_3)+1$ or $d =v_p(a_2a_3(a_2+a_3))+2$. In the first case, since $\binom{np^{k-1}}{a_1} \mid B(\mathbf{a},\mathbf{b},\mathbf{c})$, Lemma~\ref{lem:lower} implies that $B(\mathbf{a},\mathbf{b},\mathbf{c})\equiv 0 \bmod p^{\max\{k-1-v_p(a_1),0\}}$ and so 
\begin{equation}
B(p\cdot  (\mathbf{a},\mathbf{b},\mathbf{c})) \equiv B(\mathbf{a},\mathbf{b},\mathbf{c}) \bmod p^{e},
\end{equation}
where $e \ge  d + k-1 -v_p(a_1) \ge 2k + v_p(a_2+a_3) \ge 2k$, and so the summand of $S_2$ corresponding to $(\mathbf{a},\mathbf{b},\mathbf{c})$ vanishes modulo $p^{2k}$. Suppose now that we are in the second case, $d =v_p(a_2a_3(a_2+a_3))+2$. Since $a_i+b_i+c_i=p^{k-1}n$ for each $i\in \{1,2,3\}$, we have 
\begin{equation}
B(\mathbf{a},\mathbf{b},\mathbf{c}) = \prod_{i=1}^{3} \binom{np^{k-1}}{a_i,b_i,c_i},
\end{equation}
and in particular $\binom{np^{k-1}}{a_2}\binom{np^{k-1}}{a_3} \mid B(\mathbf{a},\mathbf{b},\mathbf{c})$, so that by Lemma~\ref{lem:lower} we have 
\begin{equation}\label{eq:b exp sum}
B(\mathbf{a},\mathbf{b},\mathbf{c}) \equiv 0 \bmod p^{\max\{k-1-v_p(a_2),0\} + \max\{k-1-v_p(a_3),0\}}.
\end{equation}
From \eqref{eq:b d div} and \eqref{eq:b exp sum} we obtain
\begin{equation}
B( p\cdot  (\mathbf{a},\mathbf{b},\mathbf{c})) \equiv B(\mathbf{a},\mathbf{b},\mathbf{c}) \bmod p^{e},
\end{equation}
where $e \ge  d + (k-1 -v_p(a_2)) + (k-1-v_p(a_3)) \ge 2k + v_p(a_2+a_3) \ge 2k$, and so the summand of $S_2$ corresponding to $(\mathbf{a},\mathbf{b},\mathbf{c})$ vanishes modulo $p^{2k}$ in this case also. All in all, $S_2 \equiv 0 \bmod p^{2k}$.

Finally, we deal with $S_3$. We have shown that $S_1$ and $S_2$ vanish modulo $p^{2k}$. By \eqref{eq:diff s123}, it suffices to show that $p^{2k} \mid 3S_3$. If $p \neq 3$ then clearly $S_3=0$ since $p \mid (\mathbf{a},\mathbf{b},\mathbf{c})$ forces $(\mathbf{a},\mathbf{b},\mathbf{c})/p \in S(np^{k-1})$. So from now on we suppose that $p=3$, and shall show $3^{2k-1} \mid S_3$.

We first demonstrate that each summand of $S_3$ is divisible by $3^{2k-2}$. As this is vacuous for $k=1$, we assume $k \ge 2$. The condition $(\mathbf{a},\mathbf{b},\mathbf{c}) \notin S(n3^{k-1})$ implies $3 \nmid b_2+2b_3+2c_2+c_3$, which forces one of $\{b_2,b_3,c_2,c_3\}$ not being divisible by $3$, say $3 \nmid b_2$. Next, at least one of $a_2$ and $c_2$ is also indivisible by $3$ (say, $a_2$) since otherwise the condition $n3^{k-1} = a_2+b_2+c_2$ gives a contradiction modulo $3$. The equality $\binom{3^kn}{3\mathbf{a}}\binom{3^kn}{3\mathbf{b}}\binom{3^kn}{3\mathbf{c}} = \frac{3^k n}{3b_2} \frac{3^kn}{3a_2} \binom{3^kn-1}{3b_1,3b_2-1,3b_3}\binom{3^kn-1}{3a_1,3a_2-1, 3a_3} \binom{3^kn}{3\mathbf{c}}$ implies that indeed each summand of $S_3$ is divisible by $3^{2k-2}$. Assuming still that $k \ge 2$, we shall show that 
\begin{equation}\label{eq:mod3}
3 \mid \sum_{\substack{3\cdot (\mathbf{a},\mathbf{b},\mathbf{c}) \in S(n3^k)\\ (\mathbf{a},\mathbf{b},\mathbf{c}) \notin S(n3^{k-1})}} \binom{3^kn}{3\mathbf{a}}\binom{3^kn}{3\mathbf{b}}\binom{3^kn}{3\mathbf{c}} 3^{-(2k-2)},
\end{equation}
finishing the proof for this case. For each $(\mathbf{a},\mathbf{b},\mathbf{c})$ in the summation range we define $\Delta_1 = a_2-a_3 \bmod 3$, $\Delta_2 = b_2-b_3 \bmod 3$ and $\Delta_3 = c_2-c_3 \bmod 3$. The condition $3 \nmid b_2+2b_3+2c_2+c_3$ is equivalent to $\Delta_2 \neq \Delta_3$. Moreover, we have $\Delta_1 + \Delta_2 + \Delta_3 = 0$. This forces $\{ \Delta_1,\Delta_2,\Delta_3\}$ being a permutation of $\{0 \bmod 3,1 \bmod 3 ,2 \bmod 3\}$, and so if $(\mathbf{a},\mathbf{b},\mathbf{c})$ is in the summation range, then $(\mathbf{b},\mathbf{c},\mathbf{a})$ and $(\mathbf{c},\mathbf{a},\mathbf{b})$ are two additional distinct tuples in the range, and evidently cyclic permutations of $(\mathbf{b},\mathbf{c},\mathbf{a})$ contribute the same value to the right-hand side of \eqref{eq:mod3}, proving the congruence \eqref{eq:mod3} holds.

It is left to deal with the case $k=1$, which we deal with differently. We must prove that $B_{3n} \equiv B_n \bmod 9$ for $3 \nmid n$. By the formula $B_n = \sum_{k=0}^{\lfloor n/3 \rfloor} (-1)^k 3^{n-3k}\binom{n}{3k} \binom{3k}{2k} \binom{2k}{k}$, dating back to Zagier's paper \cite{zagier2009}, we have $B_{3n} \equiv (-1)^n \binom{3n}{n,n,n} \bmod 9$ and $B_n \equiv (-1)^{\lfloor \frac{n}{3} \rfloor} 3^{n-3\lfloor \frac{n}{3} \rfloor}\binom{n}{3 \lfloor \frac{n}{3} \rfloor } \binom{3\lfloor \frac{n}{3}\rfloor}{\lfloor \frac{n}{3}\rfloor,\lfloor \frac{n}{3}\rfloor,\lfloor \frac{n}{3}\rfloor} \bmod 9$. For $n=1$, it is easy to check that both expression are $3 \bmod 9$. For $n > 1$ (with $3 \nmid n$), both expressions are $0$ modulo $9$ by e.g. Kummer's theorem \cite{kummer1852}. \qed
\section{Legendrian and hypergeometric sequences}
Zagier \cite{zagier2009} and Almkvist and Zudilin \cite{almkvist2006} have found the following Legendrian and hypergeometric Ap{\'e}ry-like sequences:
\begin{equation}
	\begin{split}
		u_n &= C_{\alpha}^n \sum_{k=0}^{n} (-1)^k \binom{-\alpha}{n-k}^2 \binom{\alpha-1}{k}, \qquad u_n = C_{\alpha}^n \sum_{k=0}^{n} \binom{-\alpha}{n-k}^2 \binom{\alpha-1}{k}^2,\\
		u_n &= C_{\alpha}^n \binom{-\alpha}{n}\binom{\alpha-1}{n}, \qquad u_n = C_{\alpha}^n \binom{-\alpha}{n}\binom{\alpha-1}{n}\binom{2n}{n}
	\end{split}
\end{equation}
for $\alpha \in \{1/2, 1/3,1/4,1/6\}$, where $C_{\alpha} =\alpha^{-3}$ if $\alpha \in \{1/3, 1/4\}$ and $C_{\alpha} = 2\alpha^{-3}$ if $\alpha \in \{1/2, 1/6\}$. It is relatively easy to find constant-term representations for the hypergeometric sequences, several are listed in the OEIS. For the second-order Legendrian sequence $u_n= 16^n \sum_{k=0}^{n} (-1)^k \binom{-1/2}{n-k}^2 \binom{-1/2}{k} =\sum_{k=0}^{n} 4^k \binom{2n-2k}{n-k}^2 \binom{2k}{k}$ we have found and verified the representation $u_n = \CT(\Lambda^n)$ with
\begin{equation}
	\Lambda = (-xy)^{-1}  (x^2 y^2 - 2 x^2 y - 2x y^2 + x^2 - 12xy + y^2 - 2x -2y+ 1).
\end{equation}
The only interior point of $\Lambda$'s Newton polytope is the origin, and $\Lambda$ becomes a `good' Laurent polynomial upon substituting $x^2,y^2$ in place of $x,y$. 
Similarly, for the second-order Legendrian sequence $u_n = 27^n \sum_{k=0}^{n} (-1)^k \binom{-1/3}{n-k}^2 \binom{-2/3}{k}$ we have a representation with
\begin{equation}
	\Lambda = (xy)^{-1}  (x^3 + 3x^2y + 3xy^2 + y^3 - 3x^2 + 21xy - 3y^2 + 3x + 3y - 1)
\end{equation}
which has no interior points other than the origin, and which factorizes upon substituting $x^3,y^3$ for $x,y$. 

For the third-order Legendrian sequence $u_n = 16^n \sum_{k=0}^{n} \binom{-1/2}{n-k}^2 \binom{-1/2}{k}^2=\sum_{k=0}^{n} \binom{2n-2k}{n-k}^2 \binom{2k}{k}^2$, we have found a constant-term representation with 
\begin{equation}
	\Lambda = (xyz)^{-1}  (-x + y + z + 1) (x - y + z + 1)  (x + y - z + 1)     (x + y + z - 1),
\end{equation}
with no interior points other than the origin. It would be interesting to find useful representations for all the Legendrian solutions.
\section{Future directions}
\begin{enumerate}
\item Suppose an integer sequence $\{ u_n\}_{n \ge 0}$ is given in terms of a binomial sum, or as the diagonal of rational function. Is there an algorithm deciding whether $u_n = \CT(\Lambda^n)$ for some Laurent polynomial $\Lambda$, and for finding such a $\Lambda$? Is there an efficient algorithm which produces $\Lambda$ such that the only integral point in its interior is the origin, in case such $\Lambda$ exists?\\Positive answers to these questions would lead to an alternative proof of Theorem~\ref{thm:newton}. The first question is a special case of a question raised by Zagier \cite[p. 769, Q1]{zagier2018}, asking how to recognize that a differential equation has geometric origin. 
\item Fix $d \in \{2,3\}$. Can one classify integer sequences $\{ u_n\}_{n \ge 0}$ such that $y=\sum_{n \ge 0} u_n z^n$ satisfies \eqref{eq:diff2} (if $d=2$) or \eqref{eq:diff3} (if $d=3$), and in addition $u_n = \CT(\Lambda^n)$ for some Laurent polynomial $\Lambda$ in $d$ variables?\\Such a classification might give support to the conjecture that the 15 sporadic sequences that were discovered are the only integral solutions to \eqref{eq:diff2} and \eqref{eq:diff3}.
\item Can one find new Calabi-Yau equations by considering the annihilating operator of $\CTS(\Lambda)$ for various good Laurent polynomials $\Lambda$? Here `good' is used as in \S\ref{sec:newconstant}.
\item Straub generalized the third-order Gauss congruences for the Ap\'ery nmbers $(\gamma)$ as follows. He proved \cite[Thm.~1.2]{straub2014} that for every $\mathbf{n}=(n_1,n_2,n_3,n_4) \in \ZZ^4$, $A(p^r \mathbf{n}) \equiv A(p^{r-1}\mathbf{n}) \bmod p^{3r}$ holds for all $r \ge 1$ and primes $p \ge 5$. Here $A(\mathbf{n})$ is the coefficient of $x_1^{n_1} x_2^{n_2} x_3^{n_3} x_4^{n_4}$ in the Taylor expansion of \eqref{eq:straub gamma}. Since $A(n,n,n,n)=a_n$, this recovers $a_{np^r} \equiv a_{np^{r-1}} \bmod p^{3r}$.

Given a Laurent polynomial $\Lambda \in \ZZ[x_1,x_1^{-1},\ldots,x_d,x_d^{-1}]$, we may consider the rational function $A=1/(1-\prod_{i=1}^{d+1} x_i \Lambda)$ with Taylor coefficients $A(\mathbf{n})$. Generalizing observation \eqref{eq:cts to diag}, note that
\begin{equation}
A(\mathbf{n}) = \CT \big( \Lambda^{n_{d+1}} \prod_{i=1}^{d} x_i^{n_{d+1}-n_i} \big)
\end{equation}
for $\mathbf{n} \in \ZZ^{d+1}$. Thus, a family of congruences $A(p^r \mathbf{n}) \equiv A(p^{r-1}\mathbf{n}) \bmod p^{kr}$, of the form considered by Straub, becomes
\begin{equation}\label{conj:ct_gen}
\CT\bigg( \Lambda^{p^r n} \prod_{i=1}^{d} x_i^{p^r n_i} \bigg) \equiv \CT\bigg( \Lambda^{p^{r-1} n} \prod_{i=1}^{d} x_i^{p^{r-1} n_i} \bigg) \bmod p^{kr}.
\end{equation}
Can one prove \eqref{conj:ct_gen} for the Laurent polynomials in Proposition~\ref{prop:rep}, with $k=2$ or $k=3$?
\end{enumerate}

\section*{Acknowledgments}
In addition to useful comments, corrections and suggestions, I would like to thank Armin Straub for crucial computational help in the proof of Proposition~\ref{prop:rep} for $(\eta)$, and Wadin Zudilin for letting me know of the survey \cite{zagier2018}. I am grateful for the referee's helpful comments regarding the exposition.
\section*{Funding}
 This project has received funding from the European Research Council (ERC) under the European Union's Horizon 2020 research and innovation programme (grant agreement No 851318).

\bibliographystyle{abbrv}
\bibliography{references}

\Addresses

\end{document}